\theoremstyle{plain}
\newtheorem{thm}{Theorem}[section] 
\theoremstyle{definition}
\newtheorem{lemma}[thm]{Lemma}
\begin{document}
\begin{center}
 \Large \bf A Mathematical Model for the Co-infection Dynamics of \emph{Pneumocystis} Pneumonia  and HIV/AIDS with Treatment
 \normalsize\normalfont\\{\color{white} hhhhh}\\
  { Michael Byamukama \footnote{*Corresponding author: mbyamukama@must.ac.ug}, Damian Kajunguri$^2$, Martin Karuhanga$^1$\\$^1$Department of Mathematics, Mbarara University of Science and Technology,}\\{P.O.Box 1410, Mbarara, Uganda.}\\
$^2$Department of Mathematics,  Kabale University, P.O. Box 317, Kabale, Uganda.\\
\textbf{Emails:} mbyamukama@must.ac.ug, dkajunguri@kab.ac.ug, mkaruhanga@must.ac.ug\\
ORCID1: 0000-0002-1357-4797, ORCID2: 0000-0001-7205-2561, ORCID3: 0000-0002-7254-9073
\end{center}

\begin{abstract}The control of opportunistic infections among HIV infected individuals should be one of the major public health concerns in reducing  mortality rate of individuals living with HIV/AIDS. In this study a deterministic co-infection mathematical model is developed to provide a quantification of treatment at each contagious stage against  \emph{Pneumocystis} Pneumonia  (PCP) among HIV infected individuals on ART. The goal is to minimize the co-infection burden by putting the curable PCP under control. The disease-free equilibria for the HIV/AIDS sub-model, PCP sub-model and the co-infection model are shown to be locally asymptotically stable when their associated disease threshold parameter is less than a unity. By use of suitable Lyapunov functions, the endemic equilibria corresponding to  HIV/AIDS and PCP sub-models are globally asymptotically stable whenever the HIV/AIDS related basic reproduction number $\mathcal{R}_{0H}$ and  the PCP related reproduction number $\mathcal{R}_{0P}$ are respectively greater than a unity.  The sensitivity analysis results implicate that the effective contact rates are the main mechanisms fueling the proliferation of the two diseases and on the other hand treatment efforts play an important role in reducing the incidence. The model numerical results reveal that PCP carriers have a considerable contribution in the transmission dynamics of PCP.  Furthermore,  treatment of PCP at all contagious phases significantly reduces the burden with HIV/AIDS and PCP co-infection. \\
\textbf{Key words:} HIV/AIDS, PCP Carriers, Basic reproduction number(s), Co-infection, Stability, Sensitivity analysis.\\
\textbf{AMS subject classification 2010:}34D23, 92D30, 93A030
\end{abstract}

\section{Introduction}\label{sec1}
In absence of a curative formula, the Human Immunodeficiency Virus (HIV) that leads to Acquired Immunodeficiency Syndrome (AIDS) has traumatized the Sub-Saharan Africa since its discovery in 1981.  HIV/AIDS persists to be a prime global public health concern, having claimed more than tens of millions lives so far and 650,000 people in 2021 alone \cite{i2}. There were 38.4 million people living with HIV/AIDS in 2021 of which the Sub-Saharan Africa accounts for almost 70\% of the global HIV/AIDS infections \cite{i2}.\\The discovery of HIV as a pathogen that leads to AIDS and subsequent general utilization of anti-retroviral therapy (ART) have altered HIV from a definite cause of death  to a manageable infection and this has significantly improved the longevity of HIV infected individuals \cite{b20,b19}.
While HIV does not kill, if left untreated it minimizes the CD4 cell count in the body, which renders  immune system incapable of  fighting off infections and hence  infected individuals advance to the fatal AIDS stage. This gives rise to many opportunistic infections such as \emph{Pneumocystis} pneumonia (PCP), candidiasis, tuberculosis, cytomegalovirus, Hepatitis, and cancers like lymphoma, kaposis sarcoma, the list is infinite.
Thus,  early medication of  HIV infectives with antiretroviral therapy (ART) can lower the viral load set point and thus prolonging the life of the infectives \cite{i3, b19}.\\
In the last two decades, considerable global efforts have been mounted to address  the HIV/AIDS epidemic, and significant progress has been made. The number of people newly infected with HIV, especially children, and the number of AIDS-related deaths have declined over the years, and the number of people with HIV receiving treatment increased to 28.7 million in 2021 \cite{i2}.

\emph{Pneumocystis} pneumonia (PCP)  is a  potentially life-threatening pulmonary infection  caused by the fungus \emph{Pneumocystis jirovecii} and has long been recognized in immunocompromised individuals and HIV-infected patients with a low CD4 cell count \cite{i1,i5,i4}. \emph{Pneumocystis jiroveci} develops via airborne transmission or reactivation of improperly treated infection \cite{b21}. The clinical signs and symptoms include fever, shortness of breath with \emph{hypoxemia}, and non-productive or dry cough. Specific diagnosis of PCP is possible using respiratory specimens with direct immunofluorescent staining and invasive procedures are required in order to prevent unnecessary treatments \cite{b21,b22}.
There is no vaccine to fend off PCP but  once infected, trimethoprim and sulfamethoxazole is the first-line agent for treatment \cite{i1}. To individuals living with HIV, trimethoprim sulfamethoxazole can be administered daily to reduce the risk of contracting PCP \cite{i5, i2}.

Co-infections with HIV/AIDS and other linked diseases have evoked attention since the concurrent infection of one host with more than one different pathogens present shattering effects to the affected individual. The effect of ART and other preventive means on HIV/AIDS patients have been studied widely in \cite{b13,b10,b14,b4} and the results are promising so long as individuals do not default the treatment.\\
Not many researchers have studied the co-infection mathematical models for the  dynamics of HIV/AIDS and pneumonia \cite{b12, b1, b3},  other available works are clinical studies \cite{b27,b29,b31,b30} and systematic reviews \cite{b28,b32,b33}.
In the study by Ntiiri \emph{et al}. \cite{b12}, the maximum protection against pneumonia and maximum protection against HIV/ AIDS would lower the rate of disease prevalence. Teklu and Mekonnen \cite{b1} developed a deterministic mathematical model considering treatment at each infection stage of the co-infection model from which they showed that increase in treatment at each infection stage, reduced the co-infected individuals at each of the respective stages. Teklu and Rao \cite{b3} modified \cite{b1} to incorporate vaccination. The combined effort of treatment against HIV/AIDS-pneumonia co-infection and vaccination strategy against pneumonia showed a reduction in the co-epidemic burden.

The studies in \cite{b12,b1} and \cite{b3} do not specify the strain of pneumonia under consideration since different strains have different causative agents and hence different treatment regiments. The studies further do not incorporate the role played by PCP carriers in the transmission dynamics of HIV/AIDS and PCP co-infection. The interference in  the transmission chain of PCP among HIV/AIDS populace remains the only viable form to reduce the burden of the co-infection. Therefore, this study is designed with the aim of developing a \emph{Pneumocystis} pneumonia-HIV/AIDS co-infection deterministic model that will investigate the role played by PCP carriers in the transmission of PCP among people living with HIV and  further analyze the role of treatment of \emph{Pneumocystis} pneumonia at all contagious phases in the co-infection dynamics of \emph{Pneumocystis} pneumonia and HIV/AIDS. The developed model will guide on how properly the co-infection burden can be mitigated through treatment of carriers, infected and co-infected individuals and thus establishing a decisive directional strategy to policy makers.\\
This paper is organized as follows. In Section \ref{sec2}, we formulate a PCP and HIV/AIDS co-infection model with varying  force of infection. A qualitative analysis of the model and its constituent sub-models is evaluated in Section \ref{sec3}, numerical results are provided in Section \ref{sec4} and Section \ref{sec5} discusses and  concludes the study.
\section{Model Formulation}\label{sec2}
\subsection{Variables of the Model}
The model subdivides the human population into ten mutually-exclusive compartments, namely susceptible individuals at risk of contracting HIV or \emph{Pneumocystis} pneumonia $(S(t))$,  carrier individuals who carry the PCP  infection and can transmit the infection $(C(t))$, PCP-infected individuals who have active disease and are infectious $(I_P (t))$, PCP-recovered individuals who have been treated of the disease$(R(t))$, HIV infected individuals with no clinical symptoms of AIDS $(I_H(t))$, HIV-infected individuals with AIDS clinical symptoms $(I_A(t))$, individuals on treatment of HIV/AIDS $(I_T(t))$, HIV-infected individuals co-infected with PCP disease $(I_{HP}(t))$, HIV-infected individuals with AIDS symptoms co-infected with active PCP $(I_{AP}(t))$ and individuals on treatment of HIV/AIDS and PCP co-infection $(T(t))$.
The total population at time $t$, denoted by $N(t)$, is given by 
$$N(t)=S(t)+C(t)+I_P (t)+R (t)+I_H(t)+I_A(t)+I_T(t)+I_{HP}(t)+I_{AP}(t)+T(t).$$

In the formulation of the model, it is assumed that HIV and PCP infected classes are susceptible to each other, there is no permanent immunity for PCP treated individuals and all PCP carriers progress to active PCP class if left untreated. It is further assumed that  individuals in classes $I_T(t)$ and $T(t)$ do not participate in transmission of HIV, since antiretroviral therapy reduces the viral load and subsequently lowering the probability of transmission.

\subsection{Parameters of the Model and their Description}
Table \ref{t} below shows the parameters of the model and their description.
\begin{table}[h!]
\caption{Parameters used in the model formulation and their description}\label{t}
\begin{tabular}{ll}
Parameter & Definition \\
\hline
$\Lambda$ & Per capita recruitment rate into susceptible population \\
$\mu$ & Per capita natural mortality rate of individuals \\
$\nu_{P}$ & Per capita PCP induced mortality rate\\
$\nu_{A}$ & Per capita AIDS induced mortality rate\\
$\nu$& Per capita AIDS-PCP co-infection induced death rate\\
$\lambda_1 $ & Rate of progression from $I_H$ class to $I_A$ class in absence of treatment\\
$\lambda_2 $ & Rate of progression from $I_{HP}$ class to $I_{AP}$ class in absence of treatment\\
$\tau_1 $ & Rate of treatment of HIV infected individuals\\
$\tau_2 $ & Rate of treatment of AIDS patients \\
$\tau_3 $ & Rate of treatment of HIV and PCP co-infected individuals\\
$\tau_4 $ & Rate of treatment of AIDS and PCP co-infected individuals\\
$\beta$ &Rate at which PCP carriers develop symptoms\\
$\xi$ &Proportion of susceptible individuals that joins the PCP carriers \\
$\pi$ & Per capita recovery rate of PCP carriers\\
$\gamma$ & Rate at which treated PCP individuals become susceptible\\
$\epsilon$& Per capita recovery rate of PCP infected individuals\\
$\rho$& Probability that a contact is efficient enough to cause a PCP infection\\
$c$& Average rate of contact with pneumonia infected individuals\\
$\omega$& Transmission coefficient for the PCP carrier individuals\\
$\eta$ & Probability that one will acquire HIV upon contact with an infected individual\\
$k$ & Average contact rate with HIV infected individuals\\
\hline
\end{tabular}
\end{table}\\
\newpage
\subsection{Compartmental Diagram}
\begin{figure}[h!]
\begin{center}
\includegraphics[width=0.4\textwidth]{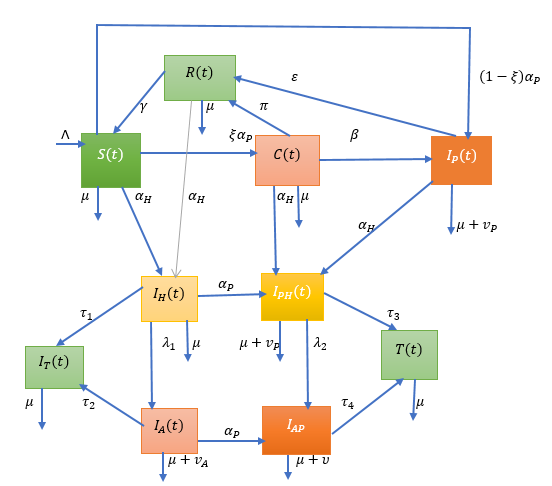}
\end{center}
\caption{A compartmental diagram for HIV/AIDS and PCP co-infection model.}\label{f1}
\end{figure}
\subsection{Equations of the Model}
Applying the assumptions, Figure \ref{f1}, definitions of variables and parameters in Table \ref{t}, the HIV/AIDS and PCP deterministic co-infection model  is obtained below.
\begin{align}
&\frac{dS}{dt}=\Lambda+\gamma R-\big(\alpha_P +\alpha_H +\mu\big) S,\nonumber\\
&\frac{dC}{dt}=\xi\alpha_P(t)S-\big(\alpha_H+\beta +\pi+\mu\big)C,\nonumber\\
&\frac{dI_P}{dt}=(1-\xi)\alpha_PS+\beta C-\big(\alpha_H+\epsilon+\mu+\nu_P\big)I_P,\nonumber\\
&\frac{dR}{dt}=\epsilon I_P+\pi C-\big(\alpha_H+\gamma+\mu\big)R,\nonumber\\
&\frac{dI_H}{dt}=\alpha_HS+\alpha_HR-\big(\alpha_P+\tau_1+\lambda_1+\mu\big)I_H,\nonumber\\
&\frac{dI_A}{dt}=\lambda_1 I_H-\big(\alpha_P+\tau_2+\mu+\nu_A\big) I_{A},\label{sys2}\\
&\frac{dI_T}{dt}=\tau_1I_H+\tau_2I_A-\mu I_T,\nonumber\\
&\frac{dI_{HP}}{dt}=\alpha_H\big(C+I_P\big)+\alpha_PI_H-\big(\lambda_2+\tau_3+\mu+\nu_P\big)I_{HP},\nonumber\\
&\frac{dI_{AP}}{dt}=\alpha_PI_A+\lambda_2I_{HP}-\big(\tau_4+\mu+\nu\big)I_{AP},\nonumber\\
&\frac{dT}{dt}=\tau_3I_{HP}+\tau_4I_{AP}-\mu T,\nonumber
\end{align}
where $t\geq 0$ with initial conditions $S(0)=S_0> 0,~C(0)=C_0\geq 0,~I_P(0)=I_{P0}\geq 0,~I_P(0)=I_{P0}\geq 0,~R(0)=R_{0}\geq 0,~I_H(0)=I_{H0}\geq 0,~I_A(0)=I_{A0}\geq 0,~I_T(0)=I_{T0}\geq 0,~I_{HP}(0)=I_{HP0}\geq 0,~I_{AP}(t)=I_{AP0}\geq 0,~T(0)=T_{0}\geq 0$.\\
The rate at which \emph{Pneumocystis} pneumonia spreads is defined as
$$\alpha_P(t)=\frac{\rho c(\omega C+I_P+\theta_1I_{HP}+\theta_2I_{AP})}{N(t)},$$ 
where $\theta_2 > \theta_1$ are modification parameters accounting for the assumed increased infectivity due to dual infection.\\
The rate at which HIV spreads  is defined as
$$\alpha_H(t)=\frac{\eta k(I_H+a_1I_{HP}+a_2I_A+a_3I_{AP})}{N(t)},$$ 
where  $a_3 > a_2 > a_1$, are modification parameters showing the infectious rate per class with the assumption that $I_{AP}$ individuals are more infectious of HIV than in individuals in classes $I_A$ an $I_{HP}$  due to high viral load \cite{b6,b14}.
\section{Analysis of the Model}\label{sec3}
\subsection{Basic Properties of the Model}
In this subsection, the basic properties of the solutions of model (\ref{sys2}) which are essential in the proofs of stability are studied.
\begin{lemma}\label{lem1}
The solutions $S(t),C(t),I_P(t),R(t),I_H(t),I_A(t),I_T(t),I_{HP}(t),I_{AP}(t)$ and $T(t)$ of system (\ref{sys2}) are non-negative for $t\geq 0.$
\end{lemma}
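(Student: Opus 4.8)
The plan is to recognize that Lemma \ref{lem1} is exactly the assertion that the non-negative cone
\[
\Omega=\{\,S>0,\ C\ge 0,\ I_P\ge 0,\ R\ge 0,\ I_H\ge 0,\ I_A\ge 0,\ I_T\ge 0,\ I_{HP}\ge 0,\ I_{AP}\ge 0,\ T\ge 0\,\}
\]
is forward invariant under the flow of (\ref{sys2}). The right-hand side of (\ref{sys2}) is a rational vector field that is locally Lipschitz on the open set $\{N>0\}$, so the standard existence-and-uniqueness theorem gives a unique maximal solution on an interval $[0,T_{\max})$ with $S(0)=S_0>0$; it suffices to show this solution never leaves $\Omega$. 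I would set
\[
t^{*}=\sup\Big\{\,t\in[0,T_{\max}) : S(s)>0 \text{ and } C(s),\dots,T(s)\ge 0 \text{ for all } s\in[0,t]\,\Big\},
\]
and observe that, since $S_0>0$ and the remaining initial data are non-negative, continuity forces $t^{*}>0$. The whole proof then reduces to ruling out $t^{*}<T_{\max}$.

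On $[0,t^{*}]$ every compartment is non-negative and $N\ge S>0$, so the two forces of infection $\alpha_P=\rho c(\omega C+I_P+\theta_1 I_{HP}+\theta_2 I_{AP})/N$ and $\alpha_H=\eta k(I_H+a_1 I_{HP}+a_2 I_A+a_3 I_{AP})/N$ are well defined, continuous and non-negative there. The key structural observation is that on this interval each equation of (\ref{sys2}) takes the form $\dot X = (\text{non-negative inflow}) - g_X(t)\,X$ with $g_X$ continuous and non-negative: e.g. $\dot S\ge -(\alpha_P+\alpha_H+\mu)S$ since $\Lambda+\gamma R\ge 0$; $\dot C\ge -(\alpha_H+\beta+\pi+\mu)C$ since $\xi\alpha_P S\ge 0$; $\dot I_P\ge -(\alpha_H+\epsilon+\mu+\nu_P)I_P$ since $(1-\xi)\alpha_P S+\beta C\ge 0$ (using $0\le\xi\le 1$); and likewise for $R,I_H,I_A,I_T,I_{HP},I_{AP},T$, the retained inflow in each case being a sum of products of non-negative parameters with the forces of infection and with state variables that are non-negative on $[0,t^{*}]$. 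Integrating each of these scalar differential inequalities (Gronwall, or an explicit integrating factor) gives $X(t)\ge X(0)\exp\!\big(-\int_0^{t} g_X\big)$ for every compartment $X$ and all $t\in[0,t^{*}]$, whence $C,\dots,T$ remain $\ge 0$ and $S$ remains $>0$ on the closed interval $[0,t^{*}]$. If $t^{*}<T_{\max}$, continuity would then let us prolong the interval on which the defining condition of $\Omega$ holds strictly beyond $t^{*}$, contradicting the definition of $t^{*}$; hence $t^{*}=T_{\max}$ and the solution stays in $\Omega$ throughout its interval of existence.

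The one point that needs care, and the only thing I would flag as a genuine subtlety, is the apparent circularity between $N>0$ (needed for $\alpha_P,\alpha_H$ to make sense) and positivity of the compartments: this is precisely what the bootstrap in the definition of $t^{*}$ handles, since on $[0,t^{*}]$ we already have $N\ge S>0$, and the integrating-factor estimates then carry $S>0$ and the non-negativity of the other nine states up to and including $t^{*}$. A secondary, purely bookkeeping point worth stating explicitly is the standing assumption, implicit in Table \ref{t}, that all rate parameters are non-negative and $0\le\xi\le 1$, which is what makes the ``inflow'' part of each equation non-negative. Beyond this, no step exceeds a one-line estimate, so I expect no real obstacle. (If one wants the conclusion literally for all $t\ge 0$, one adds the a priori bound $\dot N\le \Lambda-\mu N$, which bounds $N$ and gives $T_{\max}=\infty$; this is the natural companion invariant-region statement.)
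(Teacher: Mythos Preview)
Your argument is correct and more carefully set up than the paper's own proof, but it proceeds along a genuinely different line. The paper argues by contradiction at a putative first exit time: it supposes there is a smallest $t_i$ at which one compartment, say $S$, hits zero with $S'(t_i)<0$ while all other compartments are still positive, and then reads off from the $i$-th equation of (\ref{sys2}) that the inflow terms force the derivative at $t_i$ to be strictly positive, contradicting $S'(t_i)<0$; this is repeated for each of the ten compartments. Your approach instead packages each equation as $\dot X\ge -g_X(t)X$ on $[0,t^*]$ and integrates the scalar inequality to obtain the exponential lower bound $X(t)\ge X(0)\exp\bigl(-\int_0^t g_X\bigr)$, which directly prevents any compartment from reaching zero (or $S$ from losing strict positivity) before $t^*$. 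What your route buys is a clean treatment of the circularity between $N>0$ and the well-definedness of $\alpha_P,\alpha_H$, plus an explicit acknowledgement of the local Lipschitz hypothesis and of the parameter sign assumptions $0\le\xi\le1$; the paper's version is terser but leaves these points implicit. Either argument suffices for the lemma.
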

\begin{proof}
Let the initial values be non-negative, that is, $S(0)>0,C(0)>0,I_P(0)>0,R(0)>0,I_H(0)>0,I_A(0)>0,I_T(0)>0,I_{HP}(0)>0,I_{AP}(0)>0,T(0)>0$ , then for all $t>0$ we prove that $S(t)>0,C(t)>0,I_P(t)>0,R(t)>0,I_H(t)>0,I_A(t)>0,I_T(t)>0,I_{HP}(t)>0,I_{AP}(t)>0$ and $T(t)>0.$\\
By contradiction, assume that there exists a first time $t_1$ such that\\
 $S(t_1)=0, S'(t_1)<0$ and  $S(t)>0,C(t)>0,I_P(t)>0,R(t)>0,I_H(t)>0,I_A(t)>0,I_T(t)>0,I_{HP}(t)>0,I_{AP}(t)>0, T(t)>0 $ for $0<t<t_1$ or there exists a \\$t_2:C(t_2)=0, C'(t_2)<0$ $S(t)>0,C(t)>0,I_P(t)>0,R(t)>0,I_H(t)>0,I_A(t)>0,I_T(t)>0,I_{HP}(t)>0,I_{AP}(t)>0, T(t)>0 $ for $0<t<t_1$ or there exists a \\$t_3:I_P(t_3)=0, I_P'(t_3)<0$ $S(t)>0,C(t)>0,I_P(t)>0,R(t)>0,I_H(t)>0,I_A(t)>0,I_T(t)>0,I_{HP}(t)>0,I_{AP}(t)>0, T(t)>0 $ for $0<t<t_3$ or there exists a \\$t_4:R(t_4)=0, R'(t_4)<0$ $S(t)>0,C(t)>0,I_P(t)>0,R(t)>0,I_H(t)>0,I_A(t)>0,I_T(t)>0,I_{HP}(t)>0,I_{AP}(t)>0, T(t)>0 $ for $0<t<t_4$ or there exists a \\$t_5:I_H(t_5)=0, I_H'(t_5)<0$ $S(t)>0,C(t)>0,I_P(t)>0,R(t)>0,I_H(t)>0,I_A(t)>0,I_T(t)>0,I_{HP}(t)>0,I_{AP}(t)>0, T(t)>0 $ for $0<t<t_5$ or there exists a \\$t_6:I_A(t_6)=0, I_A'(t_6)<0$ $S(t)>0,C(t)>0,I_P(t)>0,R(t)>0,I_H(t)>0,I_A(t)>0,I_T(t)>0,I_{HP}(t)>0,I_{AP}(t)>0, T(t)>0 $ for $0<t<t_6$ or there exists a \\$t_7:I_T(t_7)=0, I_T'(t_7)<0$ $S(t)>0,C(t)>0,I_P(t)>0,R(t)>0,I_H(t)>0,I_A(t)>0,I_T(t)>0,I_{HP}(t)>0,I_{AP}(t)>0, T(t)>0 $ for $0<t<t_7$ or there exists a \\$t_8:I_{HP}(t_8)=0, I_{HP}'(t_8)<0$ $S(t)>0,C(t)>0,I_P(t)>0,R(t)>0,I_H(t)>0,I_A(t)>0,I_T(t)>0,I_{HP}(t)>0,I_{AP}(t)>0, T(t)>0 $ for $0<t<t_8$ or there exists a \\$t_9:I_{AP}(t_9)=0, I_{AP}'(t_9)<0$ $S(t)>0,C(t)>0,I_P(t)>0,R(t)>0,I_H(t)>0,I_A(t)>0,I_T(t)>0,I_{HP}(t)>0,I_{AP}(t)>0, T(t)>0 $ for $0<t<t_9$ or there exists a \\$t_{10}:T(t_{10})=0, C'(t_{10})<0$ $S(t)>0,C(t)>0,I_P(t)>0,R(t)>0,I_H(t)>0,I_A(t)>0,I_T(t)>0,I_{HP}(t)>0,I_{AP}(t)>0, T(t)>0 $ for $0<t<t_{10}$.\\
From system (\ref{sys2}), we see that
\begin{align}\frac{dS(t_1)}{dt}&=\Lambda+\gamma R(t_1)>0, \nonumber\\ \frac{dC(t_2)}{dt}&=\xi\alpha_P(t_2)S(t_2)>0,\nonumber\\ \frac{dI_P(t_3)}{dt}&=(1-\xi)\alpha_PS(t_3)+\beta C(t_3)>0,\nonumber\\\frac{dR(t_4)}{dt}&=\epsilon I_P(t_4)+\pi C(t_4)>0,\nonumber\\\frac{dI_H(t_5)}{dt}&=\alpha_HS(t_5)+\alpha_HR(t_5)>0,\nonumber\\\frac{dI_A(t_6)}{dt}&=\lambda_1 I_H(t_6)>0,\nonumber\\\frac{dI_T(t_7)}{dt}&=\tau_1I_H(t_7)+\tau_2I_A(t_7)>0,\nonumber\\\frac{dI_{HP}(t_8)}{dt}&=\alpha_H(t_8)\big(C(t_8)+I_P(t_8)\big)+\alpha_PI_H(t_8)>0,\nonumber\\\frac{dI_{AP}(t_9)}{dt}&=\alpha_P(t_9)I_A(t_9)+\lambda_2I_{HP}(t_9)>0,\nonumber\\\frac{dT(t_{10})}{dt}&=\tau_3I_{HP}(t_{10})+\tau_4I_{AP}(t_{10})>0,\nonumber\end{align} which leads to a contradiction and consequently, $S(t),C(t),I_P(t),R(t),I_H(t),I_A(t),I_T(t),I_{HP}(t),I_{AP}(t)$ and $T(t)$ remain positive. Therefore, the solutions of system (\ref{sys2}) are non-negative for $t\geq 0$.
\end{proof}
\begin{lemma}\label{lem2}
The \emph{Pneumocystis} pneumonia and HIV/AIDS co-infection model (\ref{sys2}) is mathematically and epidemiologically well-posed.
\end{lemma}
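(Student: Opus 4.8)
The statement bundles together the two ingredients that make a deterministic model ``well-posed'': (i) existence and uniqueness of solutions, and (ii) existence of a positively invariant, bounded feasible region in which all trajectories starting with epidemiologically meaningful data remain. The plan is to dispatch (i) quickly by a smoothness argument, then build the invariant region for (ii) via a single scalar differential inequality for $N(t)$, and finally invoke Lemma~\ref{lem1} to close the argument.

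First I would address existence and uniqueness. The right-hand side of system~(\ref{sys2}) is a rational function of the state variables whose only denominators are $N(t)$, appearing through $\alpha_P(t)$ and $\alpha_H(t)$. On the open set where $N>0$ the vector field is therefore continuously differentiable, hence locally Lipschitz, so the Picard--Lindel\"of theorem guarantees a unique maximal solution for each admissible initial datum (note $S(0)=S_0>0$ forces $N(0)>0$). One then has to rule out finite-time blow-up and the solution leaving the region $N>0$; this is exactly what the a priori bound from step two provides, after which the maximal solution is global in $t\geq 0$.

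Next I would construct the feasible region. Adding the ten equations of~(\ref{sys2}) and using that every force-of-infection term cancels pairwise (it merely moves individuals between compartments), one gets
\begin{align}
\frac{dN}{dt}=\Lambda-\mu N-\nu_P I_P-\nu_A I_A-\nu_P I_{HP}-\nu I_{AP}\leq \Lambda-\mu N,\nonumber
\end{align}
since all state variables are non-negative by Lemma~\ref{lem1}. A standard comparison argument (or integrating factor $e^{\mu t}$ and Gronwall) then yields
\begin{align}
N(t)\leq \frac{\Lambda}{\mu}+\Big(N(0)-\frac{\Lambda}{\mu}\Big)e^{-\mu t},\nonumber
\end{align}
so $N(t)\leq \max\{N(0),\Lambda/\mu\}$ for all $t\geq 0$ and $\limsup_{t\to\infty} N(t)\leq \Lambda/\mu$. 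In particular $N(t)$ stays bounded, which also rules out blow-up and keeps $N$ bounded (the lower bound $N\ge 0$, together with the fact that $dN/dt\ge \Lambda-(\mu+\max\{\nu_P,\nu_A,\nu\})N>0$ whenever $N$ is small, keeps trajectories off $N=0$), so the solution is global. Define
\begin{align}
\Omega=\Big\{(S,C,I_P,R,I_H,I_A,I_T,I_{HP},I_{AP},T)\in\mathbb{R}^{10}_{+}:\ N\leq \tfrac{\Lambda}{\mu}\Big\}.\nonumber
\end{align}
On the boundary face $N=\Lambda/\mu$ the inequality above gives $dN/dt\le 0$, so no trajectory can escape $\Omega$ through that face, and by Lemma~\ref{lem1} it cannot escape through the coordinate hyperplanes either; hence $\Omega$ is positively invariant, and every trajectory starting in $\mathbb{R}^{10}_{+}$ is attracted into $\Omega$.

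Combining the two steps: for any initial data in $\mathbb{R}^{10}_{+}$ with $S_0>0$ there is a unique global solution that remains non-negative (Lemma~\ref{lem1}) and bounded, eventually entering the compact set $\Omega$; thus the dynamics of~(\ref{sys2}) can be restricted to $\Omega$, where the model is mathematically well-posed and biologically sensible. I expect the only delicate point to be the bookkeeping at $N=0$ and the associated claim that the Lipschitz/invariance arguments are uniform there; in practice this is handled exactly as above, by noting that the recruitment term $\Lambda$ makes $N$ strictly increasing for small $N$, so the denominators never vanish along admissible trajectories. The remaining computations (the cancellation of the infection terms in the sum, and solving the linear differential inequality) are routine.
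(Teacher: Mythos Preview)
Your proposal is correct and follows essentially the same route as the paper: sum the ten equations to obtain $\dfrac{dN}{dt}=\Lambda-\mu N-\nu_P(I_P+I_{HP})-\nu_A I_A-\nu I_{AP}\le \Lambda-\mu N$, integrate to get $N(t)\le \dfrac{\Lambda}{\mu}+\big(N_0-\dfrac{\Lambda}{\mu}\big)e^{-\mu t}$, and conclude that $\Omega=\{\,\cdot\in\mathbb{R}^{10}_+:N\le\Lambda/\mu\}$ is positively invariant and attracting. Your version is in fact more careful than the paper's, since you also invoke Picard--Lindel\"of for local existence/uniqueness and explicitly argue that $N(t)$ stays bounded away from $0$ so the rational vector field remains smooth; the paper omits both of these points.
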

\begin{proof}
By Lemma \ref{lem1}, $N(0)=N_0> 0.$
Adding the equations of system (\ref{sys2}) gives 
\begin{equation}\frac{dN(t)}{dt}=\Lambda-\mu N(t)-\nu_P\big(I_P(t)+I_{HP}(t)\big)-\nu_AI_A(t)-\nu I_{AP}(t).\label{sys2a}\end{equation} Since $I_P(t)\geq 0, I_A(t)\geq 0, I_{HP}(t)\geq 0$ and $I_{AP}(t)\geq 0 $ for all $t\geq 0$, thus (\ref{sys2a}) gives the inequality
\begin{equation}\frac{dN(t)}{dt}+\mu N(t)\leq \Lambda.\label{sys2b}\end{equation}
With initial condition $N(0)=N_0$, integrate (\ref{sys2b}) to get 
\begin{equation}N(t)\leq \frac{\Lambda}{\mu}+\Big(N_0-\frac{\Lambda}{\mu}\Big)e^{-\mu t}.\label{sys2c}\end{equation}
As $t\to\infty$, we have $0\leq N(t)\leq \frac{\Lambda}{\mu}$.
Therefore, every feasible solution of model (\ref{sys2}) that starts in region\\ $\Omega=\Big\{\big(S(t),C(t),I_P(t),R(t),I_H(t),I_A(t),I_T(t),I_{HP}(t),I_{AP}(t),T(t)\big)\in\mathbb{R}_+^{10}:N(t)\leq \Lambda/\mu\Big\}$ remains in this region for all values of $t\geq 0$ which implies that $\Omega$ is positively invariant and attracting. Hence the HIV/AIDS-PCP co-infection model (\ref{sys2}) is mathematically and epidemiologically well-posed.
\end{proof}
It is crucial to analyze the HIV/AIDS and PCP  sub-models first and there after analyze the full co-infection model (\ref{sys2}). This is done in order to have an insight on the transmission dynamics of the respective sub-models that are initial stages of a potential interaction between the two diseases. 
\subsection{Analysis of HIV/AIDS Sub-model}
By setting $C(t)=I_P(t)=R(t)=I_{HP}(t)=I_{AP}(t)=T(t)=0,$  the following HIV/AIDS sub-model is obtained:
\begin{align}
&\frac{dS}{dt}=\Lambda-(\alpha_H+\mu)S,\nonumber\\
&\frac{dI_H}{dt}=\alpha_H S-(\tau_1+\lambda_1+\mu)I_H,\nonumber\\
&\frac{dI_A}{dt}=\lambda_1I_H-(\tau_2+\nu_A+\mu)I_A,\label{sys3a}\\
&\frac{dI_T}{dt}=\tau_1I_H+\tau_2I_A-\mu I_T,\nonumber
\end{align}
with 
\begin{equation}
\alpha_H=\eta k\frac{(I_H+a_2I_A)}{N_H}, \label{sys3b}
\end{equation}
so that 
\begin{align}
\frac{dN_H}{dt}=\Lambda-\mu N-\nu_AI_A.\nonumber
\end{align}
\newtheorem{Theorem}{Theorem}

\subsubsection{Disease-Free Equilibrium of the HIV/AIDS Sub-model}
The equilibria of system (\ref{sys3a}) are obtained by setting the right hand side of system (\ref{sys3a}) equal to zero.
The disease-free equilibrium of the model describes the model in absence of the disease or infection. Thus with  $I_{H}=I_{A}=0$ and $I_T=0$, we have  $S=\Lambda/\mu.$
Therefore, the HIV/AIDS-free equilibrium of (\ref{sys3a}) is given as $E_{0H}=\big(\Lambda/\mu,0,0,0\big).$                                                                                                                              
\subsubsection{Computation of the Basic Reproduction Number of the HIV/AIDS Sub-model}\label{subsec1.22}
Rephrasing the definition by Diekmann et al.  \cite{b15}, we define the HIV/AIDS basic reproduction number as the average number of secondary HIV infections caused by a single HIV infectious individual during his or her entire period of infectiousness. We use the next-generation matrix method as applied in \cite{b23,b26,b24,b16} to determine the basic reproduction number $\mathcal{R}_{0H}$ of (\ref{sys3a}). Let $\mathcal{F}$ denote the  matrix of the new infection terms and $\mathcal{V}$ be the matrix of the remaining transfer terms in system (\ref{sys3a}). Then we have
$$\mathcal{F}=\begin{bmatrix}\alpha_H S \\ 0 \end{bmatrix}, \mathcal{V}=\begin{bmatrix}(\tau_1+\lambda_1+\mu)I_H\\-\lambda_1I_H(t)+(\tau_2+\nu_A+\mu)I_A(t)\end{bmatrix}.$$  
We obtain the matrices $F$ and $V$ by finding the Jacobian matrices of $\mathcal{F}$ and $\mathcal{V}$ evaluated at the disease-free equilibrium  respectively to obtain\\
$$F=\begin{bmatrix}\eta k & \eta k a_2 \\ 0& 0\end{bmatrix}, V=\begin{bmatrix}q_1 & 0 \\ -\lambda & q_2\end{bmatrix},$$ where $q_1=\tau_1+\lambda_1+\mu$ and $q_2=\tau_2+\nu_A+\mu$.\\
Thus, the basic reproduction number is given by  $$\mathcal{R}_{0H}=\mathcal{K}(FV^{-1})=\frac{\eta k}{q_1q_2}\big(q_2+a_2\lambda_1\big), $$
where $\mathcal{K}$ represents the spectral radius of $FV^{-1}$.
\subsubsection{Endemic Equilibrium Point of the HIV/AIDS Sub-model}
The endemic equilibrium point denoted by $E_{eH}$ is defined as a steady state solution for system (\ref{sys3a}) which occurs when there is persistence of the HIV/AIDS in a community.
Equating the right hand side equal to zero of  system (\ref{sys3a}) and solving for $S, I_H,I_A$ and $I_T$ yields 
  $ E_{eH}=(S_e, I_{He}, I_{Ae},I_{Te} )$, where 
\begin{align}
& S_e=\frac{N_Hq_1q_2}{\eta k(q_2+a_2\lambda_1)},\nonumber\\
& I_{He}=\frac{\mu N_Hq_1q_2-\Lambda\eta k(q_2+a_2\lambda_1)}{q_1\eta k(q_2+a_2\lambda_1)},\nonumber\\
& I_{Ae}=\frac{\lambda_1\big(\mu N_Hq_1q_2-\Lambda\eta k(q_2+a_2\lambda_1)\big)}{q_1q_2\eta k(q_2+a_2\lambda_1)},\nonumber\\
& I_{Te}=\frac{\mu N_Hq_1q_2-\Lambda\eta k(q_2+a_2\lambda_1)}{\mu q_1\eta k(q_2+a_2\lambda_1)}\Big(\tau_1+\frac{\tau_2\lambda_1}{q_2}\Big),\nonumber
\end{align}
where $N_H=\frac{\Lambda\eta k(q_1q_2+\nu_A\lambda_1)(q_2+a_2\lambda_1)}{q_1q_2\big(\mu \eta k(q_2+a_2\lambda_1)+\nu_A\lambda_1\big)}.$
\begin{lemma}
The HIV/AIDS sub-model (\ref{sys3a}) has a unique endemic equilibrium point if $\mathcal{R}_{0H}>1$.
\end{lemma}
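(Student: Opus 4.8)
The plan is to reduce the equilibrium system to a single scalar equation in the force of infection and to show that, apart from the disease-free root, it has exactly one positive solution precisely when $\mathcal{R}_{0H}>1$. Write $\lambda^{*}=\alpha_H$ evaluated at a steady state. Setting the right-hand sides of (\ref{sys3a}) to zero, I would solve recursively: from the $S$-equation, $S^{*}=\Lambda/(\lambda^{*}+\mu)$; from the $I_H$-equation, $I_H^{*}=\lambda^{*}S^{*}/q_1$; from the $I_A$-equation, $I_A^{*}=\lambda_1 I_H^{*}/q_2$; and from the $I_T$-equation, $I_T^{*}=(\tau_1 I_H^{*}+\tau_2 I_A^{*})/\mu$. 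Adding all four equations of (\ref{sys3a}) (equivalently, using the balance law for $N_H$ recorded just after (\ref{sys3b})) gives the convenient relation $\mu N_H^{*}=\Lambda-\nu_A I_A^{*}$, which expresses $N_H^{*}$ in terms of $\lambda^{*}$ as well.

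Next I would substitute these expressions into the self-consistency condition $\lambda^{*}N_H^{*}=\eta k\,(I_H^{*}+a_2 I_A^{*})$. A short computation gives $I_H^{*}+a_2 I_A^{*}=\frac{\lambda^{*}\Lambda(q_2+a_2\lambda_1)}{q_1 q_2(\lambda^{*}+\mu)}$ and $N_H^{*}=\frac{\Lambda}{\mu}\cdot\frac{q_1q_2(\lambda^{*}+\mu)-\nu_A\lambda_1\lambda^{*}}{q_1q_2(\lambda^{*}+\mu)}$. The factor $\lambda^{*}\Lambda/(q_1q_2(\lambda^{*}+\mu))$ is common to both sides; the choice $\lambda^{*}=0$ recovers the disease-free equilibrium $E_{0H}$, whereas for an endemic equilibrium we may cancel it and are left with the \emph{linear} equation $q_1q_2(\lambda^{*}+\mu)-\nu_A\lambda_1\lambda^{*}=\mu\eta k(q_2+a_2\lambda_1)$, that is, $\lambda^{*}(q_1q_2-\nu_A\lambda_1)=\mu q_1 q_2(\mathcal{R}_{0H}-1)$, using $\eta k(q_2+a_2\lambda_1)=q_1q_2\mathcal{R}_{0H}$.

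Uniqueness is then immediate once the coefficient $q_1q_2-\nu_A\lambda_1$ is shown to be positive: since $q_1=\tau_1+\lambda_1+\mu>\lambda_1$ and $q_2=\tau_2+\nu_A+\mu>\nu_A$, we have $q_1q_2>\nu_A\lambda_1$, so the linear equation has the single root $\lambda^{*}=\mu q_1 q_2(\mathcal{R}_{0H}-1)/(q_1q_2-\nu_A\lambda_1)$, which is strictly positive if and only if $\mathcal{R}_{0H}>1$. I would then feed this value back through the recursion and check that $S^{*},I_H^{*},I_A^{*},I_T^{*}$ are all strictly positive and that $N_H^{*}>0$ (its numerator $\lambda^{*}(q_1q_2-\nu_A\lambda_1)+\mu q_1q_2$ is a sum of positive terms), and finally that, after eliminating $\lambda^{*}$, the resulting point is exactly the $E_{eH}$ displayed above — in particular that the stated expression for $N_H$ coincides with $(\Lambda-\nu_A I_A^{*})/\mu$.

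The only genuine subtlety is the sign of $q_1q_2-\nu_A\lambda_1$, which simultaneously guarantees well-definedness of $\lambda^{*}$ and positivity of the endemic state; everything else is the routine back-substitution standard for an SI-type compartmental model, so I would carry out those verifications briefly rather than in full detail.
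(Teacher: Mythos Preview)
Your argument is correct and in fact cleaner than the paper's. You parameterise the steady states by the force of infection $\lambda^{*}$, reduce the self-consistency condition to the linear equation $\lambda^{*}(q_1q_2-\nu_A\lambda_1)=\mu q_1q_2(\mathcal{R}_{0H}-1)$, and observe that $q_1>\lambda_1$, $q_2>\nu_A$ force the coefficient to be positive, so a unique positive root exists exactly when $\mathcal{R}_{0H}>1$. All the intermediate computations check out.

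The paper takes a quite different and less rigorous route: it assumes that at an endemic state one has $\frac{dI_H}{dt}>0$ and $\frac{dI_A}{dt}>0$, uses $S/N<1$ to bound $I_H$ in terms of $I_A$ and vice versa, and then combines the two inequalities to conclude $\mathcal{R}_{0H}>1$. This really establishes only the \emph{necessary} direction (persistence forces $\mathcal{R}_{0H}>1$) and says nothing about uniqueness; the existence part is implicitly taken from the explicit formulas for $E_{eH}$ displayed just before the lemma. Your approach, by contrast, simultaneously proves existence, positivity, and uniqueness by solving for $\lambda^{*}$ directly, and it makes transparent why $\mathcal{R}_{0H}>1$ is both necessary and sufficient. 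The paper's method is shorter but relies on a heuristic reading of ``endemic'' as ``derivatives positive''; yours is the standard rigorous treatment for frequency-dependent SI-type models.
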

\begin{proof}
If the disease is endemic in the community, then $\frac{dI_H}{dt}>0$ and $\frac{dI_A}{dt}>0$ that is 
\begin{align}
&\eta k(I_H+a_2I_A)\frac{S}{N}-q_1I_H>0,\label{sys3m}\\
&\lambda_1I_H-q_2I_A>0.\label{sys3n}
\end{align}
From (\ref{sys3m}) and using the fact that $\frac{S}{N}<1$, we have 
\begin{equation}I_H<\frac{\eta k a_2I_A}{q_1-\eta k}.\label{sys3o}\end{equation}
From (\ref{sys3n}), we have \begin{equation}I_A<\frac{\lambda_1 I_H}{q_2}.\label{sys3p}\end{equation}
Inequalities (\ref{sys3o}) and (\ref{sys3p}) together imply
\begin{align} &1<\frac{\eta k}{q_1q_2}(q_2+a_2\lambda_1)=\mathcal{R}_{0H}.\nonumber\end{align}
Thus, a unique endemic equilibrium point $E_{eH}$ exists when $\mathcal{R}_{0H}>1.$
\end{proof}
\subsubsection{Local and Global Stability of HIV/AIDS Sub-Model Disease-Free Equilibrium}
\begin{lemma}
The HIV/AIDS-free equilibrium $E_{0H}$ is locally asymptotically stable if $\mathcal{R}_{0H}<1$ and unstable otherwise.
\end{lemma}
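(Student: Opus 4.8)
The plan is to linearize the HIV/AIDS sub-model (\ref{sys3a}) about $E_{0H}=(\Lambda/\mu,0,0,0)$ and to show that the Jacobian there is a Hurwitz matrix exactly when $\mathcal{R}_{0H}<1$. Since the splitting of the new-infection and transfer terms has already been carried out in Section \ref{subsec1.22}, one could alternatively appeal to the standard next-generation-matrix stability theorem (cf. \cite{b16}); the computation sketched below is, in effect, a self-contained verification of that theorem in the present case.

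First I would form the Jacobian $J(E_{0H})$. The only mildly delicate point is differentiating the incidence $\alpha_H S=\eta k(I_H+a_2I_A)S/N_H$: every term coming from the dependence of $N_H$ on the state variables carries a factor $(I_H+a_2I_A)$ and hence vanishes at $E_{0H}$, while $S/N_H=1$ there because $S=N_H=\Lambda/\mu$. This yields
$$J(E_{0H})=\begin{bmatrix}-\mu & -\eta k & -\eta k a_2 & 0\\ 0 & \eta k-q_1 & \eta k a_2 & 0\\ 0 & \lambda_1 & -q_2 & 0\\ 0 & \tau_1 & \tau_2 & -\mu\end{bmatrix},$$
with $q_1=\tau_1+\lambda_1+\mu$ and $q_2=\tau_2+\nu_A+\mu$.

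Next I would exploit the block structure: the $S$- and $I_T$-rows decouple and contribute the eigenvalue $-\mu$ twice, so stability is decided by the central $2\times2$ block $M=\begin{bmatrix}\eta k-q_1 & \eta k a_2\\ \lambda_1 & -q_2\end{bmatrix}$. A short computation gives $\operatorname{tr}M=\eta k-q_1-q_2$ and $\det M=q_1q_2-\eta k(q_2+a_2\lambda_1)=q_1q_2\big(1-\mathcal{R}_{0H}\big)$. By the two-dimensional Routh--Hurwitz conditions, both eigenvalues of $M$ lie in the open left half-plane iff $\operatorname{tr}M<0$ and $\det M>0$. The determinant condition is precisely $\mathcal{R}_{0H}<1$; moreover that inequality forces $\eta k(q_2+a_2\lambda_1)<q_1q_2$, whence $\eta k<q_1\le q_1+q_2$ (using $a_2,\lambda_1,q_2\ge 0$), so $\operatorname{tr}M<0$ holds automatically. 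Hence $J(E_{0H})$ is Hurwitz and $E_{0H}$ is locally asymptotically stable when $\mathcal{R}_{0H}<1$.

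Finally, for the instability claim, if $\mathcal{R}_{0H}>1$ then $\det M=q_1q_2(1-\mathcal{R}_{0H})<0$, so $M$ has one positive and one negative real eigenvalue; thus $J(E_{0H})$ possesses an eigenvalue with positive real part and $E_{0H}$ is unstable. I expect the only real obstacle to be the careful bookkeeping of the incidence-term derivatives at the disease-free equilibrium (ensuring the $N_H$-derivative contributions genuinely drop out); once the Jacobian is in hand, the rest is a routine $2\times2$ Routh--Hurwitz check.
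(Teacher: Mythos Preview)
Your proposal is correct and follows essentially the same route as the paper: linearize at $E_{0H}$, peel off the two $-\mu$ eigenvalues from the block structure, and apply the trace--determinant criterion to the remaining $2\times 2$ block $M$. Your treatment is in fact slightly sharper than the paper's, since you explicitly deduce $\operatorname{tr}M<0$ from $\mathcal{R}_{0H}<1$ (rather than leaving it as an auxiliary inequality) and you also handle the instability case $\mathcal{R}_{0H}>1$ via $\det M<0$, which the paper does not spell out.
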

\begin{proof}
For $E_{0H}$ to be locally asymptotically stable, the Jacobian matrix $J_{E_{0H}}$ of sub-model (\ref{sys3a})  should have  negative eigenvalues or equivalently a negative trace and a positive determinant.
\begin{equation}
J_{E_{0H}}=\begin{bmatrix}-\mu&-\eta k&-\eta ka_2&0\\ 0&\eta k-q_1&\eta k a_2&0\\ 0&\lambda_1&-q_2&0\\ 0&\tau_1&\tau_2&-\mu\end{bmatrix}.\label{sys3x}
\end{equation}
The first and fourth columns of the Jacobian matrix (\ref{sys3x}) clearly show that $-\mu$ is a repeated negative eigenvalue. The other two eigenvalues can be obtained by reducing the Jacobian matrix (\ref{sys3x})  into a $2\times 2$ matrix given by
\begin{equation}J_{E_{0H}}^*=\begin{bmatrix}\eta k-q_1&\eta ka_2\\\lambda_1&-q_2\end{bmatrix}.\label{sys3y}\end{equation}
We now employ the trace determinant strategy on (\ref{sys3y}) such that \begin{align}&tr(J_{E_{0H}})=\eta k-(q_1+q_2)<0~\text{if}~\eta k<q_1+q_2.\label{sys3z}\end{align}
From $ \mathcal{R}_{0H}=\frac{\eta k}{q_1q_2}\big(q_2+a_2\lambda_1\big)$, it is easy to see that (\ref{sys3z}) gives \begin{align}&\frac{\eta k}{q_1q_2}\big(q_2+a_2\lambda_1\big)<\frac{q_1+q_2}{q_1q_2}\big(q_2+a_2\lambda_1\big),\nonumber\\ &\mathcal{R}_{0H}<\frac{q_1+q_2}{q_1q_2}\big(q_2+a_2\lambda_1\big).\label{sys3aa}\end{align}
Thus $tr(J_{E_{0H}})<0$ if $\mathcal{R}_{0H}<\frac{q_1+q_2}{q_1q_2}\big(q_2+a_2\lambda_1\big).$\\
We now consider $\det(J_{E_{0H}})=-\eta k(q_2+\lambda_1 a_2)+q_1q_2>0~\text{if}~\eta k(q_2+\lambda_1 a_2)<q_1q_2.$
Therefore,  \begin{equation}\mathcal{R}_{0H}=\frac{\eta k}{q_1q_2}\big(q_2+a_2\lambda_1\big)<1\implies \det(J_{E_{0H}})>1~\text{if}~\mathcal{R}_{0H}<1.\label{sys3ab}\end{equation}
Thus, $E_{0H}$ is locally asymptotically stable if and only if the inequalities (\ref{sys3aa}) and (\ref{sys3ab}) hold.
\end{proof}
\begin{Theorem}
The disease-free equilibrium  $E_{0H}$ of system (\ref{sys3a}) is globally asymptotically stable if $\mathcal{R}_{0H}<1$ and unstable otherwise. The disease free equilibrium $E_{0H}$ is the only equilibrium when $\mathcal{R}_{0H}\leq 1$.
\end{Theorem}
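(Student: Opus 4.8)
The plan is to prove global attractivity of $E_{0H}$ by the Castillo-Chavez, Feng and Huang splitting technique, then combine it with the local asymptotic stability already established above to obtain global asymptotic stability; the instability for $\mathcal{R}_{0H}>1$ and the uniqueness of the equilibrium for $\mathcal{R}_{0H}\le 1$ will be read off from the reduced Jacobian $J_{E_{0H}}^{*}$ and from the endemic-equilibrium computation, respectively.

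First I would write (\ref{sys3a}) in the block form $\dot{X}=\mathcal{F}(X,Z)$, $\dot{Z}=\mathcal{G}(X,Z)$, where $X=(S,I_T)$ collects the uninfected variables and $Z=(I_H,I_A)$ the infected variables, so that $\mathcal{G}(X,0)=0$. For hypothesis (H1), setting $Z=0$ reduces the system to $\dot{S}=\Lambda-\mu S$ and $\dot{I_T}=-\mu I_T$, whose unique equilibrium $X^{*}=(\Lambda/\mu,0)$ is clearly globally asymptotically stable. For hypothesis (H2), I would write $\mathcal{G}(X,Z)=AZ-\widehat{\mathcal{G}}(X,Z)$ with $A=D_Z\mathcal{G}(X^{*},0)=\begin{bmatrix}\eta k-q_1&\eta k a_2\\ \lambda_1&-q_2\end{bmatrix}=F-V$; a one-line computation then gives $\widehat{\mathcal{G}}(X,Z)=\left(\eta k(I_H+a_2 I_A)\bigl(1-\tfrac{S}{N_H}\bigr),\,0\right)^{\top}$, which is nonnegative on the feasible region $\Omega$ because $0\le S\le N_H$ by Lemmas \ref{lem1} and \ref{lem2}. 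The matrix $A=F-V$ has nonnegative off-diagonal entries, and by the standard characterisation underlying the next-generation method $\mathcal{R}_{0H}=\mathcal{K}(FV^{-1})<1$ is equivalent to all eigenvalues of $A$ having negative real part. Hence the hypotheses of the Castillo-Chavez et al.\ theorem hold and $E_{0H}$ is globally attracting whenever $\mathcal{R}_{0H}<1$; since $E_{0H}$ is also locally asymptotically stable for $\mathcal{R}_{0H}<1$, it is globally asymptotically stable.

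For the remaining assertions, the determinant of the reduced Jacobian (\ref{sys3y}) is $\det(J_{E_{0H}}^{*})=q_1q_2-\eta k(q_2+a_2\lambda_1)=q_1q_2(1-\mathcal{R}_{0H})$, which is negative when $\mathcal{R}_{0H}>1$; thus $J_{E_{0H}}^{*}$ has a positive eigenvalue and $E_{0H}$ is unstable. Finally, by the lemma above the system (\ref{sys3a}) possesses an endemic equilibrium $E_{eH}$ only when $\mathcal{R}_{0H}>1$ (its component $I_{He}=\dfrac{\mu N_Hq_1q_2-\Lambda\eta k(q_2+a_2\lambda_1)}{q_1\eta k(q_2+a_2\lambda_1)}$ being positive precisely in that case, and vanishing at $\mathcal{R}_{0H}=1$), so for $\mathcal{R}_{0H}\le 1$ the point $E_{0H}$ is the only equilibrium in $\Omega$ — for $\mathcal{R}_{0H}<1$ this is also immediate from global asymptotic stability.

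The step I expect to be the main obstacle is verifying (H2) cleanly in the presence of the frequency-dependent incidence: one must keep the denominator $N_H$ under control so that $\widehat{\mathcal{G}}(X,Z)\ge 0$ holds throughout $\Omega$ — this is exactly where nonnegativity and positive invariance from Lemmas \ref{lem1}--\ref{lem2} are needed — and one must invoke the correct sign equivalence between $\mathcal{R}_{0H}-1$ and the stability modulus of $F-V$. Should the splitting hypotheses prove awkward to state, an equivalent route is a comparison argument: $S/N_H\le 1$ yields $\dot{Z}\le (F-V)Z$ componentwise, the cooperative linear system $\dot{y}=(F-V)y$ tends to $0$ when $\mathcal{R}_{0H}<1$, forcing $Z(t)\to 0$ and then $S(t)\to\Lambda/\mu$, $I_T(t)\to 0$ via the theory of asymptotically autonomous systems; alternatively, the linear Lyapunov function $L=w^{\top}V^{-1}Z$ built from the left Perron eigenvector $w^{\top}$ of $V^{-1}F$ gives $\dot{L}\le(\mathcal{R}_{0H}-1)\,w^{\top}Z\le 0$, with $\dot{L}=0$ only at $E_{0H}$, and LaSalle's invariance principle closes the argument.
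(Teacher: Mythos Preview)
Your argument is correct but follows a different route from the paper. The paper constructs a linear Lyapunov function $W=\psi_1 I_H+\psi_2 I_A$ with the explicit choice $\psi_2=\eta k a_2\psi_1/q_2$, bounds $\alpha_H S\le \eta k(I_H+a_2I_A)$ via $S/N_H\le 1$, and obtains $\dot W\le q_1\psi_1(\mathcal{R}_{0H}-1)I_H+\text{(nonpositive)}$, then applies LaSalle's invariance principle directly. Your primary argument instead applies the Castillo-Chavez--Feng--Huang splitting theorem (the very method the paper reserves for the PCP sub-model DFE), and you list the Lyapunov route only as a backup. Both approaches hinge on the same inequality $S/N_H\le 1$; the paper's Lyapunov computation is shorter and self-contained, while your splitting formulation packages the spectral step $s(F-V)<0\iff \mathcal{R}_{0H}<1$ into a citable black box and makes the structure more transparent. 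Your handling of the remaining two clauses --- instability for $\mathcal{R}_{0H}>1$ via the sign of $\det J_{E_{0H}}^{*}=q_1q_2(1-\mathcal{R}_{0H})$, and uniqueness of the equilibrium for $\mathcal{R}_{0H}\le 1$ via the sign of $I_{He}$ --- is more explicit than the paper's proof, which does not address those clauses separately.
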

\begin{proof}
Let \begin{equation}W=\psi_1I_H+\psi_2 I_A,\label{sys3ac}\end{equation} be the Lyapunov function which involves individuals who contribute to HIV/AIDS in the population where  $\psi_1$ and $\psi_2$ are non-negative constants. The time derivative of the Lyapunov function (\ref{sys3ac}) is given by 
\begin{align}\frac{dW}{dt}&=\psi_1\big(\alpha_HS-q_1I_H\big)+\psi_2\big(\lambda_1-q_2I_A\big),\nonumber\\&\leq\big((\eta k-q_1)\psi_1+\psi_2\lambda_1\big)I_H+\big(\psi_1\eta ka_2-\psi_2q_2\big)I_A.\label{sys3ad}\end{align}
Fixing $\psi_1>0$ and setting $\psi_2=\frac{1}{q_2}\eta ka_2\psi_1$, we obtain
\begin{align}\frac{dW}{dt}&\leq q_1\psi_1\Big(\frac{\eta k(q_2+\lambda_1a_2)}{q_1q_2}-1\Big)I_H+\frac{q_1q_2\psi_1}{\lambda_1}\Big(\frac{\eta k(q_2+a_2\lambda_1)}{q_1q_2}-1\Big)I_A,\nonumber\\
&=\Big(q_1\psi_1I_H+\frac{q_1q_2\psi_1}{\lambda_1}I_A\Big)\Big(\mathcal{R}_{0H}-1\Big).\nonumber\end{align}
Thus, $\frac{dW}{dt}\leq 0$ when $\mathcal{R}_{0H} \leq 1$. Furthermore, $\frac{dW}{dt}=0$ if and only if either $I_H=I_A=0$ or $\mathcal{R}_{0H}=1$. In either case, the largest compact invariant subset of  $\Omega_1=\{S(t),I_H(t),I_A(t),I_T(t)\in \mathbb{R}^4_+:\frac{dW}{dt}=0\}$ is the singleton $E_{0H}$. By Lasalle's Invariance Principle \cite{b11,b17},  $E_{0H}$ is globally stable in $\mathbb{R}^4_+$ provided $\mathcal{R}_{0H}\leq 1.$
\end{proof}
\subsubsection{Local and Global Stability of the HIV/AIDS Endemic Equilibrium}
\begin{Theorem}
The endemic equilibrium point $E_{eH}$ is locally asymptotically stable if $\mathcal{R}_{0H}>1$, otherwise it is unstable.
\begin{proof}
To show the local stability of the endemic equilibrium point, the method of Routh-Hurwitz stability criteria is employed \cite{b34,b24}. The Jacobian matrix of the system (\ref{sys3a}) evaluated at the endemic equilibrium point $E_{eH}$ is 
\begin{equation}
J_{E_{eH}}=\begin{bmatrix}-B_1&-B_3&-B_4&0\\\alpha_H^*&B_2&B_5&0\\0&\lambda_1&-q_2&0\\0&\tau_1&\tau_2&-\mu\end{bmatrix},\label{sys3ae}
\end{equation}
where $B_1=(\alpha_H^*+\mu),B_2=\eta k/\mathcal{R}_{0H}-q_1, B_3=\eta k/\mathcal{R}_{0H},B_4=B_5=\eta ka_2/\mathcal{R}_{0H}, q_1=(\tau_1+\lambda_1+\mu), q_2=(\tau_2+\nu_A+\mu)$ and $\alpha_H^*$ is the force of HIV infection evaluated at the disease endemic equilibrium point.\\
The characteristic equation of Jacobian (\ref{sys3ae}) is given by 
\begin{align}P(\lambda)=&\det\begin{bmatrix}\lambda+B_1&B_3&B_4&0\\-\alpha_H^*&\lambda-B_2&-B_5&0\\0&-\lambda_1&\lambda+q_2&0\\0&-\tau_1&-\tau_2&\lambda+\mu\end{bmatrix},\nonumber\\=&(\lambda+\mu)\Big(\lambda^3+(B_1+q_2-B_2)\lambda^2+\big(B_1(q_2-B_2)+B_3\alpha_H^*-B_2q_2-\lambda_1B_5\big)\lambda\nonumber\\&+(B_3q_2+B_4\lambda_1)\alpha_H^*-(B_2q_2+\lambda_1B_5)B_1\Big).\label{sys3af}\end{align}
From characteristic equation (\ref{sys3af}), we have $\lambda=-\mu<0$ and \begin{equation}\lambda^3+a_1\lambda^2+a_2\lambda+a_3=0,\label{sys3ag}\end{equation} where 
\begin{align}
&a_1=B_1+q_1-B_2=\alpha_H^*+q_1+q_2+\mu-\eta k/\mathcal{R}_{0H}>0~\text{if}~\mathcal{R}_{0H}>1,\nonumber\\
&a_2=B_1(q_2-B_2)+B_3\alpha_H^*-B_2q_2-\lambda_1 B_5=(\alpha_H^*+\mu)(q_1+q_2-\eta k/\mathcal{R}_{0H})+q_1q_2/\mathcal{R}_{0H}(\mathcal{R}_{0H}-1)>0~\text{if}~\mathcal{R}_{0H}>1,\nonumber\\
&a_3=\alpha_H^*(B_3q_2+B_4\lambda_1)-B_1(B_2q_2+B_5\lambda_1)=\eta k/\mathcal{R}_{0H}(\alpha_H^*q_2+a_2\lambda_1)+(\alpha_H^*+\mu)/\mathcal{R}_{0H}(\mathcal{R}_{0H}-1)>0~\text{if}~\mathcal{R}_{0H}>1.\nonumber
\end{align}
It has been estalished that for $\mathcal{R}_{0H}>1$, $a_1,a_2$ ,$a_3$ are positive and we now compute $a_1a_2-a_3$  to get 
$$a_1a_2-a_3=\big(\alpha_H^*+q_1+q_2+\mu-\frac{\eta k}{\mathcal{R}_{0H}}\big)\Big((\alpha_H^*+\mu)(q_1+q_2-\frac{\eta k}{\mathcal{R}_{0H}}\Big)+\frac{q_1q_2}{\mathcal{R}_{0H}}(\mathcal{R}_{0H}-1)\Big(q_1+q_2-\frac{\eta k}{\mathcal{R}_{0H}}\Big)>0,~\text{if}~\mathcal{R}_{0H}>1.$$
The Routh-Hurwitz conditions $(a_1>0,a_2>0,a_3>0,a_1a_2>a_3)$  for characteristic equation (\ref{sys3ag}) are satisfied whenever $\mathcal{R}_{0H}>1$. Hence, the endemic equilibrium point of system (\ref{sys3a}) is locally asymptotically stable.
\end{proof}
\end{Theorem}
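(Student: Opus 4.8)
The plan is to prove local asymptotic stability by linearising the HIV/AIDS sub-model (\ref{sys3a}) about $E_{eH}$ and applying the Routh--Hurwitz criterion to the characteristic polynomial of the Jacobian. First I would write the Jacobian $J$ of (\ref{sys3a}) at a generic state and evaluate it at $E_{eH}=(S_e,I_{He},I_{Ae},I_{Te})$, simplifying the entries by means of the steady-state identities. The key identity is that at the endemic equilibrium the susceptible fraction satisfies $S_e/N_H=q_1q_2/\big(\eta k(q_2+a_2\lambda_1)\big)=1/\mathcal{R}_{0H}$; consequently the partial derivatives of the incidence $\alpha_H S$ with respect to $I_H$ and $I_A$ collapse to $\eta k/\mathcal{R}_{0H}$ and $\eta k a_2/\mathcal{R}_{0H}$, while its derivative with respect to $S$ reproduces the endemic force of infection $\alpha_H^*$. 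This yields precisely the matrix $J_{E_{eH}}$ displayed in (\ref{sys3ae}).

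Next I would exploit the block structure of $J_{E_{eH}}$: the treatment compartment $I_T$ enters only through its own diagonal entry $-\mu$ (the fourth column is $(0,0,0,-\mu)^{\top}$), so $\lambda=-\mu<0$ peels off as one eigenvalue and the remaining spectrum is that of the $3\times 3$ block on $(S,I_H,I_A)$. Expanding its characteristic determinant produces a cubic $\lambda^{3}+a_1\lambda^{2}+a_2\lambda+a_3=0$ with $a_1,a_2,a_3$ as written after (\ref{sys3af}). The heart of the argument is then to verify, under the hypothesis $\mathcal{R}_{0H}>1$, that $a_1>0$, $a_3>0$ and $a_1a_2-a_3>0$ (whence $a_2>0$ as well); the Routh--Hurwitz conditions for a cubic then force all three roots into the open left half-plane, and combined with $\lambda=-\mu$ this gives local asymptotic stability of $E_{eH}$.

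The observation that makes the sign analysis manageable is that each of $a_1,a_2,a_3$ and $a_1a_2-a_3$ rearranges into a sum of manifestly nonnegative terms plus a positive multiple of $(\mathcal{R}_{0H}-1)$. Writing $D:=q_1+q_2-\eta k/\mathcal{R}_{0H}$, one has $D>0$ unconditionally, since $\eta k/\mathcal{R}_{0H}=q_1q_2/(q_2+a_2\lambda_1)<q_1$; then $a_1=\alpha_H^*+\mu+D$ is visibly positive (because $\alpha_H^*>0$ when $\mathcal{R}_{0H}>1$), and analogous regroupings express $a_2$ and $a_3$ with $D$ and $(\mathcal{R}_{0H}-1)$ as the only potentially delicate factors. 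I expect the genuine labour --- and the main obstacle --- to be the product $a_1a_2-a_3$: one must expand it carefully and keep it in a factored form in which $D$ and $(\mathcal{R}_{0H}-1)$ appear explicitly as multipliers, so that its positivity for $\mathcal{R}_{0H}>1$ can be read off term by term rather than being lost in cancellations.

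Finally, for the ``otherwise unstable'' clause I would note that by the earlier existence lemma $E_{eH}$ lies in the feasible region $\Omega$ only when $\mathcal{R}_{0H}>1$ (its infected coordinates are negative for $\mathcal{R}_{0H}<1$ and it coalesces with $E_{0H}$ at $\mathcal{R}_{0H}=1$), and that when $\mathcal{R}_{0H}<1$ at least one Routh--Hurwitz inequality fails, so the linearisation carries an eigenvalue with positive real part and $E_{eH}$ cannot be asymptotically stable. Together with the previous paragraphs this establishes the stated dichotomy.
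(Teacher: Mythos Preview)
Your proposal is correct and follows essentially the same approach as the paper: linearise at $E_{eH}$ using the identity $S_e/N_H=1/\mathcal{R}_{0H}$, factor off the eigenvalue $\lambda=-\mu$ from the $I_T$-block, and verify the Routh--Hurwitz conditions for the residual cubic. Your observation that $D=q_1+q_2-\eta k/\mathcal{R}_{0H}>0$ holds unconditionally (since $\eta k/\mathcal{R}_{0H}=q_1q_2/(q_2+a_2\lambda_1)<q_1$) is a mild sharpening of the paper's sign analysis, and your treatment of the instability clause goes slightly beyond the paper, which proves only the stable direction.
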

\begin{Theorem}
If  $\mathcal{R}_{0H}>1$, the endemic equilibrium point $E_{eH}$ of the model (\ref{sys3a}) is globally asymptotically stable.
\end{Theorem}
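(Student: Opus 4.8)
The plan is to construct a Goh--Volterra (logarithmic) Lyapunov function, exploiting first that the $I_T$--equation in (\ref{sys3a}) is decoupled: it does not feed back into $S$, $I_H$, $I_A$ except through $N_H$, so once the other three variables converge, $I_T$ solves a linear asymptotically autonomous equation and tends to $I_{Te}$. Hence it suffices to establish global stability of $(S_e,I_{He},I_{Ae})$ for the first three equations. Before differentiating anything I would record the equilibrium identities obtained by zeroing the right-hand sides, namely $\Lambda=\alpha_H^* S_e+\mu S_e$, $q_1 I_{He}=\alpha_H^* S_e$ and $q_2 I_{Ae}=\lambda_1 I_{He}$, where $\alpha_H^*=\eta k(I_{He}+a_2 I_{Ae})/N_H$; these are what turn the derivative of the candidate into a sign-definite expression.

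Then I would take
$$V=\Big(S-S_e-S_e\ln\frac{S}{S_e}\Big)+c_1\Big(I_H-I_{He}-I_{He}\ln\frac{I_H}{I_{He}}\Big)+c_2\Big(I_A-I_{Ae}-I_{Ae}\ln\frac{I_A}{I_{Ae}}\Big),$$
with constants $c_1,c_2>0$ to be pinned down. Each summand is non-negative on the positive orthant and vanishes only at the equilibrium, so $V$ is positive definite about $E_{eH}$. Differentiating along solutions of (\ref{sys3a}), substituting out $\Lambda$, $q_1$, $q_2$ via the identities above, and writing every quotient in terms of $x_1=S_e/S$, $x_2=SI_{He}/(S_e I_H)$ and $x_3=I_H I_{Ae}/(I_{He} I_A)$ (and the analogous ratio carrying the $a_2 I_A$ contribution to the force of infection), I expect $\dot V$ to reduce to a non-positive combination of the elementary function $g(x)=1-x+\ln x$, which satisfies $g(x)\le 0$ with equality iff $x=1$. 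The constants $c_1,c_2$ are chosen precisely to cancel the residual cross terms linking the $I_H$-- and $I_A$--balances (the $\lambda_1$ coupling and the $a_2$ piece of $\alpha_H$); a natural choice is $c_1=1$ and $c_2$ proportional to $\eta k a_2 S_e/(N_H q_2)$, echoing the weighting $\psi_2=(\eta k a_2/q_2)\psi_1$ already used in the disease-free Lyapunov function.

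Having shown $\dot V\le 0$ throughout the interior of the biologically feasible region, I would finish with LaSalle's Invariance Principle: on $\{\dot V=0\}$ all the ratios $x_i$ equal $1$, which forces $S=S_e$, $I_H=I_{He}$, $I_A=I_{Ae}$, so the largest invariant subset is the singleton $E_{eH}$ (with $I_T=I_{Te}$ supplied by the decoupled equation), proving global asymptotic stability whenever $\mathcal{R}_{0H}>1$. The step I expect to be the real obstacle is the non-constant total population $N_H$ sitting in the denominator of $\alpha_H$: with standard (frequency-dependent) incidence the infection term does not split multiplicatively the way the Goh--Volterra calculation needs, so the cross-term cancellation only goes through after one either replaces $N_H$ by its equilibrium value $N_{He}$ — legitimate only on the invariant set $dN_H/dt=0$ or in the $t\to\infty$ limit — or augments $V$ with an $N_H$-dependent correction. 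I would therefore first show $N_H(t)\to N_{He}$, pass to the limiting autonomous system via the theory of asymptotically autonomous systems, and only then run the Lyapunov argument; making that reduction airtight, rather than the algebra of $\dot V$, is where the care is needed.
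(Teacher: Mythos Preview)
Your approach and the paper's are at heart the same: a Goh--Volterra logarithmic Lyapunov function followed by LaSalle's principle. The paper in fact takes $\mathcal{W}=\sum\big(X-X_e+X_e\ln(X_e/X)\big)$ with unit weights over all four variables including $I_T$, expands $\dot{\mathcal{W}}=P-Q$, and then simply asserts that ``if $P<Q$'' the conclusion follows --- without ever verifying that inequality. Your version is strictly more careful: you decouple $I_T$, introduce free weights $c_1,c_2$ to force the cross-term cancellation, and aim for the standard reduction to sums of $g(x)=1-x+\ln x$.

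You have also put your finger on the genuine obstacle, which the paper glosses over entirely: with frequency-dependent incidence $\alpha_H=\eta k(I_H+a_2 I_A)/N_H$ and $N_H$ non-constant, the bilinear structure that the Goh--Volterra calculation relies on is absent, and the $g(x)$ identities do not close. However, your proposed remedy --- first show $N_H(t)\to N_{He}$ and then pass to the limiting system --- has a circularity problem: since $dN_H/dt=\Lambda-\mu N_H-\nu_A I_A$, the total population $N_H$ converges to $N_{He}$ only once $I_A$ already converges to $I_{Ae}$, which is precisely part of what you are trying to prove. The asymptotically-autonomous reduction therefore cannot be invoked as a preliminary step. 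Workable routes are either to augment $V$ with an $N_H$-dependent term so that the full flow is handled directly, or to rewrite the system in the fractions $s=S/N_H$, $i_H=I_H/N_H$, $i_A=I_A/N_H$, which obey a closed system with genuinely bilinear incidence; as it stands, neither your outline nor the paper's argument actually closes this gap.
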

\begin{proof}
To establish the global stability of the endemic equilibrium $E_{eH}$, the following Lyapunov function   $\mathcal{W}(S,I_H,I_A,I_T)$ is constructed as
\begin{align}\mathcal{W}=\Big(S-S_e+S_e\ln\frac{S_e}{S}\Big)+\Big(I_H-I_{He}+I_{He}\ln\frac{I_{He}}{I_H}\Big)+\Big(I_A-I_{Ae}+I_{Ae}\ln\frac{I_{Ae}}{I_A}\Big)+\Big(I_T-I_{Te}+I_{Te}\ln\frac{I_{Te}}{I_T}\Big).\label{sys3ah}\end{align}
It is clear that $\mathcal{W}(S_e,I_{He},I_{Ae},I_{Te})=0$, and $\mathcal{W}>0$ otherwise. Moreover, $W$ is radially unbounded in $\mathbb{R}_+^4$.
We now seek to determine the sign of $d\mathcal{W}/dt$ by differentiating (\ref{sys3ah}) with respect to $t$ as follows
\begin{align}\frac{d\mathcal{W}}{dt}=\Big(1-\frac{S_e}{S}\Big)\frac{dS}{dt}+\Big(1-\frac{I_{He}}{I_H}\Big)\frac{dI_H}{dt}+\Big(1-\frac{I_{Ae}}{I_A}\Big)\frac{dI_A}{dt}+\Big(1-\frac{I_{Te}}{I_T}\Big)\frac{dI_T}{dt}.\label{sys3ai}\end{align}
Substituting for $dS/dt,dI_H/dt,dI_A/dt$ and $dI_T/dt$ into (\ref{sys3ai}) gives
\begin{align}
\frac{d\mathcal{W}}{dt}=&\Big(1-\frac{S_e}{S}\Big)\big(\Lambda-(\alpha_H+\mu)\big)+\Big(1-\frac{I_{He}}{I_H}\Big)\big(\alpha_H-q_1I_H\big)+\Big(1-\frac{I_{Ae}}{I_A}\Big)\big(\lambda_1I_H-q_2I_A\big)\nonumber\\
&+\Big(1-\frac{I_{Te}}{I_T}\Big)\big(\tau_1I_H+\tau_2I_A-\mu I_T\big),\nonumber\\
=&\Lambda+(\alpha_H+\mu)S_e+\alpha_HS+q_1I_{He}+(\lambda_1+\tau_1)I_H +q_2I_{Ae}+\tau_2I_A+\mu I_{Te}-\Big(\Lambda\frac{S_e}{S}\nonumber\\
&+\big(\alpha_H(1+\frac{I_{He}}{I_H})+\mu\big)S+\big(q_1+\lambda_1\frac{I_{Ae}}{I_A}+\tau_1\frac{I_{Te}}{I_T}\big)I_H+\big(q_2+\tau_2\frac{I_{Te}}{I_T}\big)I_A+\mu I_T\Big),\nonumber\\
\frac{d\mathcal{W}}{dt}=&P-Q.\label{sys3aj}
\end{align}
From (\ref{sys3aj}), we have 
$P=\Lambda+(\alpha_H+\mu)S_e+\alpha_HS+q_1I_{He}+(\lambda_1+\tau_1)I_H +q_2I_{Ae}+\tau_2I_A+\mu I_{Te}$ and \\$Q=\Lambda\frac{S_e}{S}+\big(\alpha_H(1+\frac{I_{He}}{I_H})+\mu\big)S+\big(q_1+\lambda_1\frac{I_{Ae}}{I_A}+\tau_1\frac{I_{Te}}{I_T}\big)I_H+\big(q_2+\tau_2\frac{I_{Te}}{I_T}\big)I_A+\mu I_T$.\\
Therefore from (\ref{sys3aj}), if $P<Q$, then $\frac{d\mathcal{W}}{dt}$ will be negative definite, implying that $\frac{d\mathcal{W}}{dt}<0.$ It is also clear that $\frac{d\mathcal{W}}{dt}=0$ if and only if $S=S_e,I_H=I_{He},I_A=I_{Ae}$ and $I_T=I_{Te}$.\\
Therefore, the largest compact invariant set in $\{(S,I_H,I_A,I_T)\in\Omega_1:\frac{d\mathcal{W}}{dt}=0\}$ is the singleton $\{E_{eH}\}$ where $E_{eH}$ is the endemic equilibrium point of system (\ref{sys3a}). By La Salle's invariant principle \cite{b11,k,b17}, it implies that $E_{eH}$ is globally asymptotically stable in $\Omega_1$ if $P\leq Q$, which holds if and only if $\mathcal{R}_{0H}> 1$.
\end{proof}
\subsection{Analysis of \emph{Pneumocystis} Pneumonia Sub-model}
We obtain the PCP sub-model when $I_H(t)=I_A(t)=I_{T}(t)=I_{HP}(t)=I_{AP}=T(t)=0$ in system \ref{sys2}, which gives
\begin{align}
&\frac{dS}{dt}=\Lambda+\gamma R-(\alpha_P+\mu)S,\nonumber\\
&\frac{dC}{dt}=\xi \alpha_PS-(\beta+\pi+\mu)C,\nonumber\\
&\frac{dI_P}{dt}=(1-\xi)\alpha_P S+\beta C-(\epsilon+\nu_P+\mu)I_P,\label{sys4a}\\
&\frac{dR}{dt}=\pi C+\epsilon I_P-(\gamma+\mu)R,\nonumber\\
&N_P=S+C+I_P+R.\nonumber
\end{align}

\subsubsection{Disease-Free Equilibrium of the PCP Sub-model}
The disease free equilibrium point of  model (\ref{sys4a}) describes the model in absence of PCP that is the carrier and infected classes are zero which gives, $C=I_{P}=0, R=0$ and $S=\Lambda/{\mu}.$
Therefore, the disease-free equilibrium point denoted by $E_{0P}=(S,C,I_{P},R )$ is ${E}_{0P}=\Big(\Lambda/\mu,0,0,0\Big).$
\subsubsection{\emph{Pneumocystis} Pneumonia Basic Reproduction Number}
Following a similar  procedure in sub-subsection \ref{subsec1.22}, the PCP basic reproduction number is computed and obtained as 

$$ \mathcal{R}_{0P}=\frac{\rho c}{k_1k_2}\Big(\xi(k_2\omega+\beta)+(1-\xi)k_1\Big). $$
\subsubsection{Endemic Equilibrium Point}
The endemic equilibrium point denoted by $E_{eP}$ is defined as a steady state solution for system (\ref{sys4a}) and it occurs when there is a persistence of \emph{Pneumocystis} Pneumonia. Hence, $E_{eP}=( S_e, C_e, I_{Pe}, R_e )$ can be determined by solving system (\ref{sys4a}) with the right hand side equal to zero, from which we obtain,

\begin{eqnarray} 
 S_e&=&\frac{k_1k_2 N_P}{\rho c\big((1-\xi)k_1+\xi(\omega k_2+\beta)\big)},\nonumber \\
C_e&=&\frac{\xi k_2k_3\Big(\mu k_1k_2N_P-\Lambda\rho c\big((1-\xi)k_1+\xi(\omega k_2+\beta)\big)\Big)}{A},\nonumber\\
I_{Pe}&= &\frac{k_3\Big((1-\xi)k_1+\xi\beta\Big)\Big(\mu k_1k_2N_P-\Lambda\rho c\big((1-\xi)k_1+\xi(\omega k_2+\beta)\big)\Big)}{A},\nonumber\\
R_e&=&\frac{\Big(\mu k_1k_2N_P-\Lambda\rho c\big((1-\xi)k_1+\xi(\omega k_2+\beta)\big)\Big)}{A}\Big(\pi\xi k_2+\epsilon\big((1-\xi)k_1+\xi\beta\big)\Big),\nonumber
\end{eqnarray}
together with $A=\rho c\Big((1-\xi)k_1+\xi\beta\Big)\Big(\xi k_2\gamma \pi-k_1k_2k_3+\gamma\epsilon\big((1-\xi)k_1+\xi\beta\big)\Big)$ and \\$N_P=\frac{\Lambda\Big(A+\nu_P\rho ck_3\big(1-\xi)k_1+\xi\beta\big)\big((1-\xi)k_1+\xi(\omega k_2+\beta)\big)\Big)}{\mu A-\nu_P\mu k_1k_2k_3\big((1-\xi)k_1+\xi\beta\big)}.$
\begin{lemma} 
For $\mathcal{R}_{0P}>1$, the system (\ref{sys4a}) has a unique endemic equilibrium point $E_e$ and no endemic equilibrium otherwise.
\end{lemma}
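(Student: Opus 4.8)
The plan is to follow the same template as the previous lemma for the HIV/AIDS sub-model: first show that the existence of an endemic equilibrium of (\ref{sys4a}) forces $\mathcal{R}_{0P}>1$, and then verify that when $\mathcal{R}_{0P}>1$ the components of $E_{eP}$ are uniquely determined and lie in the feasible region $\Omega$. Throughout I would write $k_1=\beta+\pi+\mu$, $k_2=\epsilon+\nu_P+\mu$, $k_3=\gamma+\mu$ for the exit rates appearing in $\mathcal{R}_{0P}$ and in the closed-form equilibrium.

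For the first part, I would work at a steady state with force of infection $\alpha_P^{*}>0$. The last three equations of (\ref{sys4a}) at equilibrium give $C_e=\xi\alpha_P^{*}S_e/k_1$, $I_{Pe}=\alpha_P^{*}S_e\big((1-\xi)k_1+\xi\beta\big)/(k_1k_2)$ and $R_e=(\pi C_e+\epsilon I_{Pe})/k_3$, so that $C_e,I_{Pe},R_e$ are fixed positive multiples of the infection flux $x:=\alpha_P^{*}S_e$. Substituting $C_e$ and $I_{Pe}$ into $\alpha_P^{*}N_P=\rho c(\omega C_e+I_{Pe})$ and cancelling the nonzero factor $\alpha_P^{*}$ produces the identity $N_P=\mathcal{R}_{0P}\,S_e$, i.e. $S_e/N_P=1/\mathcal{R}_{0P}$. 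At an endemic equilibrium $C_e,I_{Pe},R_e>0$, hence $S_e<N_P$, which immediately forces $\mathcal{R}_{0P}>1$; equivalently, no endemic equilibrium exists when $\mathcal{R}_{0P}\le 1$. This mirrors the inequality argument used for the HIV sub-model, but with exact equalities at the steady state.

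For the converse I would combine $N_P=\mathcal{R}_{0P}S_e$ with $N_P=S_e+C_e+I_{Pe}+R_e=S_e+xM$, where $M>0$ is the sum of the three multipliers found above, to obtain $xM=S_e(\mathcal{R}_{0P}-1)$, that is $\alpha_P^{*}=(\mathcal{R}_{0P}-1)/M$ --- a single, strictly positive value precisely when $\mathcal{R}_{0P}>1$. The remaining scale is then pinned down by the conservation law obtained on summing (\ref{sys4a}), $\Lambda=\mu N_P+\nu_P I_{Pe}$: writing $N_P=x(1/\alpha_P^{*}+M)$ and $I_{Pe}=x\,m_P$ with $m_P>0$ turns this into one linear equation with a unique positive root $x$, whence $S_e=x/\alpha_P^{*}$, $C_e$, $I_{Pe}$ and $R_e$ are uniquely determined and positive. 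I would close the argument by checking that these values agree with the displayed closed forms for $E_{eP}$ and that they remain in $\Omega$. The step I expect to be the main obstacle is precisely this reconciliation: one must show that the common numerator $\mu k_1k_2N_P-\Lambda\rho c\big((1-\xi)k_1+\xi(\omega k_2+\beta)\big)$ and the quantity $A$ (which carries the competing terms $-k_1k_2k_3$ and $\xi k_2\gamma\pi+\gamma\epsilon((1-\xi)k_1+\xi\beta)$) have the same sign exactly when $\mathcal{R}_{0P}>1$, after the explicit expression for $N_P$ is inserted. If $A$ cannot be signed cleanly in closed form, the scalar reduction above already delivers existence, uniqueness and feasibility without invoking the explicit expressions, and I would fall back on it.
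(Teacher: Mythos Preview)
Your proposal is correct and in fact more rigorous than the paper's own argument, but it follows a genuinely different route. The paper mimics its HIV sub-model lemma verbatim: it posits that ``if the disease persists then $\frac{dC}{dt}>0$ and $\frac{dI_P}{dt}>0$'', bounds $C$ using $S/N\le 1$ to get $C<\xi\rho c I_P/(k_1-\xi\rho c\omega)$, feeds this into the $I_P$ inequality, and reads off $\mathcal{R}_{0P}>1$. That argument is heuristic --- at an equilibrium the derivatives vanish rather than being strictly positive, and uniqueness is asserted rather than proved. Your approach instead works exactly at the steady state: expressing $C_e,I_{Pe},R_e$ as multiples of $x=\alpha_P^{*}S_e$ and substituting into the definition of $\alpha_P^{*}$ yields the clean identity $S_e/N_P=1/\mathcal{R}_{0P}$, from which both the necessity of $\mathcal{R}_{0P}>1$ and the uniqueness of $\alpha_P^{*}=(\mathcal{R}_{0P}-1)/M$ follow immediately; the conservation law $\Lambda=\mu N_P+\nu_P I_{Pe}$ then pins down the scale. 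This buys you an honest uniqueness proof and sidesteps the awkward ``persistence $\Rightarrow$ positive derivative'' step, at the cost of a little more algebra. Your fallback remark is also apt: the scalar reduction already delivers existence, uniqueness and positivity, so there is no need to wrestle with the sign of $A$ in the displayed closed forms.
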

\begin{proof}
If the disease persists in the community, then $\frac{dC}{dt}>0$ and $\frac{dI_P}{dt}>0$, that is, 
\begin{eqnarray}
\xi\alpha_P S-k_1C=\xi\rho c(\omega C+I_P)\frac{S}{N}-k_1C&>&0,\label{e1}\\
(1-\xi)\alpha_P S+\beta-k_2I_P=(1-\xi)\rho c(\omega C+I_P)\frac{S}{N}+\beta C-k_2I_P&>&0.\label{e2}
\end{eqnarray}
From inequality (\ref{e1}) and applying the fact that $S/N\leq 1$, we have
\begin{equation}C< \frac{\xi\rho cI_P}{k_1-\xi \rho c \omega},\label{e3}\end{equation}
Inequalities (\ref{e2}) and (\ref{e3}), together imply
\begin{align}
\mathcal{R}_{0P}=\frac{\rho c}{k_1k_2}\Big((1-\xi)k_1+\xi(\beta+k_2\omega)\Big)&>1. \nonumber 
\end{align}
Thus, a unique endemic equilibrium point exists when $\mathcal{R}_{0P}>1$.
\end{proof}
\subsubsection{Local and Global  Stability of PCP  Free Equilibrium}
\begin{Theorem} The disease-free equilibrium point ${E}_{0P}$ of system (\ref{sys4a}) is locally asymptotically stable whenever $\mathcal{R}_{0P} < 1$ and unstable whenever $\mathcal{R}_{0P} > 1$.\end{Theorem}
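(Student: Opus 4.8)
The plan is to mirror the argument already used for $E_{0H}$: linearise system (\ref{sys4a}) about $E_{0P}=(\Lambda/\mu,0,0,0)$, exploit the block structure of the Jacobian to discard the obviously stable directions, and reduce the stability question to a single $2\times2$ block on which the trace--determinant (Routh--Hurwitz) test can be read off in terms of $\mathcal{R}_{0P}$. Throughout I would write $k_1=\beta+\pi+\mu$, $k_2=\epsilon+\nu_P+\mu$, $k_3=\gamma+\mu$.

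First I would compute $J_{E_{0P}}$. Since $N_P=S$ at the disease-free state we have $S/N_P=1$ there, and because the factor $\omega C+I_P$ vanishes at $E_{0P}$ all partial derivatives of $\alpha_P S$ with respect to $S$ and $R$ are zero, which gives
\[
J_{E_{0P}}=\begin{bmatrix}-\mu & -\rho c\omega & -\rho c & \gamma\\ 0 & \xi\rho c\omega-k_1 & \xi\rho c & 0\\ 0 & (1-\xi)\rho c\omega+\beta & (1-\xi)\rho c-k_2 & 0\\ 0 & \pi & \epsilon & -k_3\end{bmatrix}.
\]
The first column exhibits $-\mu$ as an eigenvalue; the remaining $3\times3$ block is block lower-triangular, so $-k_3$ is a second eigenvalue, and the last two eigenvalues are those of
\[
M=\begin{bmatrix}\xi\rho c\omega-k_1 & \xi\rho c\\ (1-\xi)\rho c\omega+\beta & (1-\xi)\rho c-k_2\end{bmatrix}.
\]

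Next I would evaluate $\det M$ and $\operatorname{tr}M$. A short expansion (in which the two $\xi(1-\xi)(\rho c)^2\omega$ terms cancel) gives $\det M=k_1k_2-\rho c\bigl(\xi(\omega k_2+\beta)+(1-\xi)k_1\bigr)=k_1k_2\bigl(1-\mathcal{R}_{0P}\bigr)$, so $\det M>0$ precisely when $\mathcal{R}_{0P}<1$ and $\det M<0$ when $\mathcal{R}_{0P}>1$; in the latter case $M$ has a real positive eigenvalue and hence $E_{0P}$ is unstable. For the stable case it remains to verify $\operatorname{tr}M=\rho c(\xi\omega+1-\xi)-(k_1+k_2)<0$. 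I expect this to be the only non-mechanical point, since it is not literally the statement $\mathcal{R}_{0P}<1$: I would argue that $\mathcal{R}_{0P}<1$ forces $\rho c<k_1k_2\big/\bigl(\xi(\omega k_2+\beta)+(1-\xi)k_1\bigr)$, and then invoke the elementary inequality
\[
k_1k_2(\xi\omega+1-\xi)\le(k_1+k_2)\bigl(\xi(\omega k_2+\beta)+(1-\xi)k_1\bigr),
\]
whose right-minus-left side equals $\xi\beta(k_1+k_2)+(1-\xi)k_1^{2}+\xi\omega k_2^{2}\ge0$, to deduce $\operatorname{tr}M<0$. With $\operatorname{tr}M<0<\det M$ both eigenvalues of $M$ have negative real part, so all four eigenvalues of $J_{E_{0P}}$ do, and $E_{0P}$ is locally asymptotically stable when $\mathcal{R}_{0P}<1$. (Alternatively one could bypass the algebra and cite the van den Driessche--Watmough theorem, since $\mathcal{R}_{0P}$ was obtained by the next-generation method; I would keep the direct computation to match the style of the HIV/AIDS sub-model proof.) The main obstacle is thus purely bookkeeping: keeping the $\xi$ versus $(1-\xi)$ split straight in the determinant, and supplying the short inequality that ties the trace condition to $\mathcal{R}_{0P}<1$.
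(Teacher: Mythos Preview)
Your proposal is correct and follows essentially the same route as the paper: compute the Jacobian at $E_{0P}$, strip off the eigenvalues $-\mu$ and $-k_3$, and apply the Routh--Hurwitz (trace--determinant) test to the remaining $2\times 2$ block. Your version is in fact more complete than the paper's, since the paper simply asserts $a_1=k_1+k_2-\rho c((1-\xi)+\xi\omega)>0$ without deriving it from $\mathcal{R}_{0P}<1$ (your inequality argument fills that gap) and does not explicitly address the instability case $\mathcal{R}_{0P}>1$.
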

\begin{proof}
The Jacobian matrix of sub-model system (\ref{sys4a}) evaluated at the disease-free equilibrium point ${E}_{0P}$ is given as  
\begin{equation}J_{({E}_{0P})}=\begin{bmatrix}-\mu & -\rho c\omega& -\rho c& \gamma\\ 0 & \xi\rho c\omega-k_1& \xi\rho c & 0 \\0 & (1-\xi)\rho c\omega+\beta & (1-\xi)\rho c-k_2 & 0 \\0 & \pi & \epsilon & -k_3\end {bmatrix}.\label{sys4aa}\end{equation}
Thus, from the Jacobian matrix (\ref{sys4aa}), we obtain the characteristic equation;
\begin{align}P(\lambda)=(\lambda+\mu)(\lambda+k_3)\Big(\lambda^2+a_1\lambda+a_0\Big),\label{sys4ab}\end{align}
where $a_1=k_1+k_2-\rho c((1-\xi)+\xi\omega)>0$ and $a_0=k_1k_2(1-\mathcal{R}_{0P})>0$ if $\mathcal{R}_{0P}<1$.\\
Clearly the roots of (\ref{sys4ab}) $\lambda=-\mu $, $\lambda=-k_3$ are negative and by Routh-Hurwitz criterion, the characteristic polynomial $\lambda^2+a_1\lambda+a_0$ has negative eigenvalues if $a_0>0,a_1>0$ which is satisfied when $\mathcal{R}_{0P}<1$.
Therefore, since all the eigenvalues of (\ref{sys4aa}) are negative whenever $\mathcal{R}_{0P}<1$, the disease-free equilibrium point of system (\ref{sys4a}) is locally asymptotically stable.
\end{proof}

We now deploy the method described by Castillo-Chavez \emph{et al.} \cite{b18} to study the global asymptotic stability of the disease-free equilibrium.  Re-write the model system (\ref{sys4a}) as
\begin{align}
&\frac{dX_1}{dt}=\mathcal{F}(X_1,X_2),\nonumber\\
&\frac{dX_2}{dt}=\mathcal{G}(X_1,X_2),\label{sys4ac}\\
&\mathcal{G}(X_1,0)=0,\nonumber
\end{align}
where $X_1=(S,R)$ represents the uninfected population and $X_2=(C,I_P)$ represents the infected population. The disease-free equilibrium of the model is denoted by ${E}_{0P}=(X^*,0),~\text{where} ~X^*=\big(\Lambda/{\mu},0,0,0\big).$
\begin{Theorem} 
The disease free-equilibrium point  ${E}_{0P}$ is globally asymptotically stable if  $\mathcal{R}_{0P}<1$ and the following conditions should hold;
\begin{enumerate}[(T1)]
\item for $\frac{dX_1}{dt}=\mathcal{F}(X_1,0), X^*$ is globally asymptotically stable,
\item $\mathcal{G}(X_1,X_2)=AX_2-\tilde{\mathcal{G}}(X_1,X_2),$ $\tilde{\mathcal{G}}(X_1,X_2)\geq 0$ for $(X_1,X_2)\in\Omega_2$ and $A=\frac{\partial{\mathcal{G}(X_1^*,0)}}{\partial{X_2}}$ is an $M-$matrix (the off diagonal elements are non-negative) and $\Omega_2$ is the invariant region.
\end{enumerate}
\end{Theorem}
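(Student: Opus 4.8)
The strategy is to verify the two structural hypotheses (T1) and (T2) directly; once they hold, the asserted global stability is an immediate consequence of the Castillo--Chavez reduction theorem \cite{b18} together with the standing hypothesis $\mathcal{R}_{0P}<1$. Throughout I write $k_1=\beta+\pi+\mu$, $k_2=\epsilon+\nu_P+\mu$, $k_3=\gamma+\mu$, split the state as $X_1=(S,R)$, $X_2=(C,I_P)$, and let $\mathcal{F},\mathcal{G}$ denote the corresponding blocks of the vector field in (\ref{sys4a}); note that $X_2=0$ forces $\alpha_P=0$, so $\mathcal{G}(X_1,0)=0$ as required in (\ref{sys4ac}).

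To check (T1): with $X_2=0$ the reduced uninfected system is $dS/dt=\Lambda+\gamma R-\mu S$ and $dR/dt=-k_3R$. The $R$-equation gives $R(t)=R(0)e^{-k_3t}\to 0$, and substituting into the (linear, scalar) $S$-equation whose forcing tends to $\Lambda$ yields $S(t)\to\Lambda/\mu$ as $t\to\infty$. Hence $X^*=(\Lambda/\mu,0)$ is globally asymptotically stable for $dX_1/dt=\mathcal{F}(X_1,0)$, establishing (T1).

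To check (T2): with $\alpha_P=\rho c(\omega C+I_P)/N_P$ and $N_P=S+C+I_P+R$, compute $A=\partial\mathcal{G}(X^*,0)/\partial X_2$; since $S/N_P=1$ at $E_{0P}$, $A$ is exactly the lower-right $2\times 2$ block of the Jacobian (\ref{sys4aa}),
$$A=\begin{bmatrix}\xi\rho c\omega-k_1 & \xi\rho c\\ (1-\xi)\rho c\omega+\beta & (1-\xi)\rho c-k_2\end{bmatrix}.$$
Its off-diagonal entries $\xi\rho c$ and $(1-\xi)\rho c\omega+\beta$ are non-negative, so $A$ is an $M$-matrix in the sense demanded by (T2). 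Forming $\tilde{\mathcal{G}}(X_1,X_2)=AX_2-\mathcal{G}(X_1,X_2)$, the $-k_1C$ and $\beta C-k_2I_P$ contributions cancel and one is left with
$$\tilde{\mathcal{G}}(X_1,X_2)=\rho c\,(\omega C+I_P)\Big(1-\frac{S}{N_P}\Big)\begin{bmatrix}\xi\\ 1-\xi\end{bmatrix}.$$
Since $N_P=S+C+I_P+R\ge S$ and all state variables are non-negative on the invariant region $\Omega_2$ by Lemma \ref{lem1}, the scalar factor $1-S/N_P\ge 0$; as $\rho,c,\omega,\xi,C,I_P\ge 0$ this gives $\tilde{\mathcal{G}}\ge 0$ componentwise, which is (T2).

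With (T1) and (T2) verified and $\mathcal{R}_{0P}<1$ by hypothesis, the Castillo--Chavez theorem \cite{b18} yields that $E_{0P}$ is globally asymptotically stable in $\Omega_2$, completing the proof. All the computations are elementary; the only point requiring any care is the cancellation in $AX_2-\mathcal{G}$ that collapses $\tilde{\mathcal{G}}$ to the displayed non-negative product, together with the accompanying inequality $S\le N_P$ — this is precisely where restricting to the invariant region $\Omega_2$ is used, and it is the step I would expect to be the crux of a careful write-up.
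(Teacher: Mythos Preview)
Your proof is correct and follows essentially the same Castillo--Chavez verification as the paper: you compute the identical matrix $A$ and the identical $\tilde{\mathcal{G}}$, and use $S\le N_P$ to conclude non-negativity. Your treatment of (T1) is in fact slightly more careful than the paper's, since you retain the $\gamma R$ coupling in $\mathcal{F}(X_1,0)$ and explicitly argue $R(t)\to 0$ before concluding $S(t)\to\Lambda/\mu$, whereas the paper simply writes $\mathcal{F}(X_1,0)=(\Lambda-\mu S,\,0)^{\top}$.
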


\begin{proof}
From system (\ref{sys4a}), it follows that 
\begin{align}
&\mathcal{F}(X_1,X_2)=\begin{bmatrix}\Lambda +\gamma R-(\alpha_P+\mu)S\\\pi C+\epsilon I_P-(\gamma+\mu)R  \end{bmatrix},\nonumber\\
&\mathcal{G}(X_1,X_2)=\begin{bmatrix} \xi\alpha_P S-k_1C\\(1-\xi)\alpha_PS+\beta C-k_2I_P\end{bmatrix}.\label{sys4ad}
\end{align}
Consider
$\mathcal{F}(X_1,0)=\begin{bmatrix}\Lambda-\mu S\\0\end{bmatrix},$ and as $t\to\infty$, its observed that $X_1\to E_{0P}$  that is,  $S\to \Lambda/\mu$. Thus, there is convergence in $\Omega_2$ implying that (T1) holds.\\
Now
\begin{align}
&A=\frac{\partial{\mathcal{G}(X_1^*,0)}}{\partial{X_2}}=\begin{bmatrix}-k_1+\xi\rho c\omega&\xi\rho c\\\beta+(1-\xi)\rho c\omega&-k_2+(1-\xi)\rho c\end{bmatrix},\label{sys4ae}
\end{align}
and  \begin{align}\tilde{\mathcal{G}}(X_1,X_2)&=AX_2-\mathcal{G}(X_1,X_2)=\begin{bmatrix}\xi\rho c\Big(1-\frac{S}{N}\Big)\big(\omega C+I_P\big)\\(1-\xi)\rho c\Big(1-\frac{S}{N}\Big)\big(\omega C+I_P\big)\end{bmatrix}.\label{sys4af}\end{align}
Clearly from (\ref{sys4ae}), $A$ is an $M-$matrix and from (\ref{sys4af}), $\tilde{\mathcal{G}}(X_1,X_2)\geq 0$ since $\frac{S}{N}\leq 1$ implying that (T2) holds. Since both (T1) and (T2) hold, then $E_{0P}$ is globally asymptotically stable.
\end{proof}
\subsubsection{Global stability of PCP Endemic Equilibrium}
\begin{Theorem}: If $\mathcal{R}_{0P}>1$, the endemic equilibrium ${E}_{eP}$ of the system (\ref{sys4a}) is globally asymptotically stable.\end{Theorem}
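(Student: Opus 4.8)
The plan is to mirror the Lyapunov-function argument already used for the HIV/AIDS endemic equilibrium, now adapted to the four compartments $S,C,I_P,R$ of the PCP sub-model. Writing $k_1=\beta+\pi+\mu$, $k_2=\epsilon+\nu_P+\mu$, $k_3=\gamma+\mu$, I would propose the Goh--Volterra type function
\begin{align}
\mathcal{V}=&\Big(S-S_e+S_e\ln\tfrac{S_e}{S}\Big)+c_1\Big(C-C_e+C_e\ln\tfrac{C_e}{C}\Big)\nonumber\\
&+c_2\Big(I_P-I_{Pe}+I_{Pe}\ln\tfrac{I_{Pe}}{I_P}\Big)+c_3\Big(R-R_e+R_e\ln\tfrac{R_e}{R}\Big),\nonumber
\end{align}
with positive weights $c_1,c_2,c_3$ to be fixed later. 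Each bracket is nonnegative and vanishes only at the matching equilibrium coordinate, so $\mathcal{V}\ge 0$ with equality precisely at $E_{eP}$, and $\mathcal{V}$ is radially unbounded in the interior of the feasible region; hence it is a legitimate Lyapunov candidate.

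Next I would differentiate $\mathcal{V}$ along trajectories of (\ref{sys4a}) and remove the constants using the endemic identities $\Lambda+\gamma R_e=(\alpha_{Pe}+\mu)S_e$, $\xi\alpha_{Pe}S_e=k_1C_e$, $(1-\xi)\alpha_{Pe}S_e+\beta C_e=k_2I_{Pe}$ and $\pi C_e+\epsilon I_{Pe}=k_3R_e$, where $\alpha_{Pe}$ is the force of infection at the endemic state. After substitution the derivative should organize into a constant multiple of $\Lambda$ plus a sum of terms, each a product of equilibrium quantities with a factor of the form $1-u$ or $1-\tfrac{1}{u}$ in the normalized variables $u_1=S/S_e$, $u_2=C/C_e$, $u_3=I_P/I_{Pe}$, $u_4=R/R_e$ together with the incidence ratio $\tfrac{\alpha_P S}{\alpha_{Pe}S_e}$. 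I would then choose $c_1,c_2,c_3$ so that the purely linear (un-logarithmed) contributions cancel, leaving $d\mathcal{V}/dt$ expressed solely through the elementary function $g(u)=u-1-\ln u\ge 0$ and cyclic AM--GM groupings such as $2-\tfrac{1}{u_1}-u_1\le 0$. This forces $d\mathcal{V}/dt\le 0$, with equality only when all ratios equal one.

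I expect the main obstacle to be the non-constant total population $N_P$ in $\alpha_P=\rho c(\omega C+I_P)/N_P$: unlike the constant transfer coefficients, this makes the incidence ratio genuinely multilinear in the $u_i$, so the cancellation of linear terms and the final AM--GM grouping are delicate and single out a specific admissible choice of $c_1,c_2,c_3$ in terms of $\xi,\omega,\beta$ and the $k_i$. A clean way to neutralize this is to carry out the estimate on the attracting limit set where $N_P$ equals its endemic value (justified since the feasible region is attracting by Lemma \ref{lem2}), or to fold $S/N_P$ into one ratio variable so the standard Korobeinikov-type estimates apply verbatim. Once $d\mathcal{V}/dt\le 0$ is established, I would finish as in the HIV/AIDS case: equality in $g$ forces $u_1=u_2=u_3=u_4=1$, so the largest invariant set contained in $\{d\mathcal{V}/dt=0\}$ is the singleton $\{E_{eP}\}$, and LaSalle's invariance principle then gives global asymptotic stability of $E_{eP}$ in the interior of the feasible region whenever $\mathcal{R}_{0P}>1$.
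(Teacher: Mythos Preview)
Your plan matches the paper's proof: it uses the same Goh--Volterra Lyapunov function (with all weights $c_1=c_2=c_3=1$), computes $dL/dt$, and invokes LaSalle's invariance principle. The paper, however, stops at writing $dL/dt=A-B$ and concludes under the bare hypothesis ``if $A<B$''; it neither chooses weights to force cancellation nor carries out the AM--GM/$g(u)$ grouping you outline, and it does not address the non-constant $N_P$ issue you correctly flag---so your proposal is in fact a more complete version of the paper's own argument.
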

\begin{proof}
Systematically, we define an appropriate  Lyapunov function $L(S, C,I_P,R)$  such that
\begin{equation}L=\Big(S-S_e+S_e \ln\frac{S_e}{S}\Big)+\Big(C-C_e+C_e \ln\frac{C_e}{C}\Big)+\Big(I_P-I_{Pe}+I_{Pe} \ln\frac{I_{Pe}}{I_P}\Big)+\Big(R-R_e+R_e \ln\frac{R_e}{R}\Big).\label{sys4aj}
\end{equation}
Note that $L(S_e,C_e,I_{Pe},R_e)=0$ and $W>0$ otherwise.
Differentiating equation (\ref{sys4aj}) with respect to $t$, yields
\begin{equation}
\frac{dL}{dt}=\Big(1-\frac{S_e}{S}\Big) \frac{dS}{dt}+\Big(1-\frac{C_e}{C}\Big) \frac{C}{dt}+ \Big(1-\frac{I_{Pe}}{I_P}\Big) \frac{dI_{Pe}}{dt}+\Big(1-\frac{R_e}{R}\Big) \frac{dR_{e}}{dt}.\label{sys4ak}
\end{equation}
Substituting expressions for $\frac{dS}{dt},~\frac{dC}{dt},~\frac{dI_P}{dt},~\text{and}~\frac{dR}{dt}$ in equation (\ref{sys4ak}), gives
\begin{align}
\frac{dL}{dt}=&\Big(1-\frac{S_e}{S}\Big)(\Lambda+\gamma R-(\alpha_P+\mu)S )  +\Big(1-\frac{C_e}{C}\Big) (\xi\alpha_PS-k_1C)\nonumber\\
&+ \Big(1-\frac{I_{Pe}}{I_P}\Big)((1-\xi)\alpha_PS+ \beta C-k_2I_P)+\Big(1-\frac{R_e}{R}\Big)(\pi C+\epsilon I_P-(\mu+\beta)R),\nonumber\\=&\Lambda+\gamma R+(\alpha_p+\mu)S_e+\alpha_P S+k_1C_e+(\beta+\pi)C+\epsilon I_P+(\mu+\beta)R_e-\Big(S\big(\alpha_P+\mu+\xi\alpha_P\frac{C_e}{C}\nonumber\\ &+(1-\xi)\alpha_P\frac{I_{Pe}}{I_P}\big)+\frac{S_e}{S}(\lambda+\gamma R)+k_1C+k_2I_P+\beta C\frac{I_{Pe}}{I_P}+(\pi C+\epsilon I_P)\frac{R_e}{R}+(\mu+\beta)R\Big),\nonumber\\
\frac{dL}{dt}=&A-B,\label{sys4al}
\end{align}
where 
\begin{align}&A=\Lambda+\gamma R+(\alpha_p+\mu)S_e+\alpha_P S+k_1C_e+(\beta+\pi)C+\epsilon I_P+(\mu+\beta)R_e, \nonumber\\
&B=S\big(\alpha_P+\mu+\xi\alpha_P\frac{C_e}{C}+(1-\xi)\alpha_P\frac{I_{Pe}}{I_p}\big)+\frac{S_e}{S}(\lambda+\gamma R)+k_1C+k_2I_P+\beta C\frac{I_{Pe}}{I_P}+(\pi C+\epsilon I_P)\frac{R_e}{R}+(\mu+\beta)R.\nonumber
\end{align}
Thus if $A<B$, we obtain  $\frac{dL}{dt}\leq 0$ and $\frac{dL}{dt}=0$ if and only if $ S=S_e, C=C_e,I_P=I_{Pe}, R=R_e.$ 
Therefore, the largest compact invariant set in $\{ (S, C, I_P, R)\in \Omega_2:\frac{dL}{dt}=0 \} $ is the singleton $\{{E}_{0P}\}, $ where ${E}_{0P}$  is the endemic equilibrium point of the system (\ref{sys4a}) and by LaSalle's invariance principle \cite{b11,k,b17}, it implies that ${E}_{0P}$ is globally asymptotically stable in $\Omega_2$ if $A<B$.\\
\end{proof}
\subsection{Analysis of the HIV/AIDS and PCP Co-Infection Model}
In this subsection, the disease-free equilibrium  $E_0$, the basic reproduction number $\mathcal{R}_0$, and the stability of model equilibria  (\ref{sys2}) are determined.
\subsubsection{Disease-Free Equilibrium  of HIV/AIDS and PCP Co-Infection Model}
At disease-free equilibria, that is in absence of both HIV and PCP; $C(t)=I_P(t)=I_H(t)=I_T(t)=I_{HP}(t)=I_{AP}(t)=T(t)=0$.
Equating the right hand side of system (\ref{sys2}) to zero and solving, gives $S(t)=\Lambda/\mu$ and $R(t)=0$. Therefore, the disease-free equilibrium is obtained as $E_0=\Big(\Lambda/{\mu},0,0,0,0,0,0,0,0,0\Big).$
\subsubsection{The Basic Reproduction Number of HIV/AIDS and PCP Co-Infection Model}
In this case, $\mathcal{R}_0$ defines the number of secondary HIV or Pneumonia co-infections due to a single HIV or PCP infective. Thus,  using the procedure earlier described in sub-subsection \ref{subsec1.22} on HIV/AIDS and PCP co-infection model (\ref{sys2}), we let $\mathcal{F}$ be the matrix for the rate of appearance of new PCP and HIV infections and  $\mathcal{V}$ be the matrix for the rate of other transfer terms such that; \\
 $\mathcal{F}=\begin{bmatrix}\xi\alpha_P(t) S(t)\\(1-\xi)\alpha_P(t) S(t)\\\alpha_H(t)\big(S(t)+R_P(t)\big)\\0\\\alpha_H(t)\big(C(t)+I_P(t)\big)+\alpha_P(t)I_H(t)\\\alpha_P(t)I_A(t)\end{bmatrix}$
and 
$\mathcal{V}=\begin{bmatrix}\big(\alpha_H(t)+\beta+\pi+\mu\big)C(t)\\ -\beta C+\big(\alpha_H+\epsilon+\mu+\nu_P\big)I_P(t)\\\big(\alpha_P(t)+\tau_1+\lambda_1+\mu\big)I_H(t)\\-\lambda_1I_H+\big(\alpha_P+\tau_2+\mu+\nu_A\big)I_A(t)\\ \big(\lambda_2+\tau_3+\mu+\nu_P\big)I_{PH}(t)\\-\lambda_2I_{HP}+\big(\tau_4+\mu+\nu\big)I_{AP}(t) \end{bmatrix}.$\\
The Jacobian matrix  $F$ of new infections at disease-free equilibrium is given by
\begin{align}
&F=\begin{bmatrix}\xi\rho c \omega&\xi\rho  c&0&0&\xi\rho c\theta_1&\xi\rho c\theta_2\\(1-\xi)\rho c\omega&(1-\xi)\rho c&0&0&(1-\xi)\rho c\theta_1&(1-\xi)\rho c\theta_2\\0&0&\eta k&a_2\eta k&a_1\eta k&a_3\eta_k\\0&0&0&0&0&0\\0&0&0&0&0&0\\0&0&0&0&0&0 \end{bmatrix}.\nonumber
\end{align}
The Jacobian matrix $V$ for the rate of transfer from one component to another at disease-free equilibrium is given by
\begin{align}
&V=\begin{bmatrix} k_1&0&0&0&0&0\\-\beta&k_2&0&0&0&0\\0&0&q_1&0&0&0\\0&0&-\lambda_1&q_2&0&0\\0&0&0&0&d_1&0\\0&0&0&0&-\lambda_2&d_2\\\end{bmatrix},\nonumber
\end{align}
where
$k_1=(\beta+\pi+\mu),k_2=(\epsilon+\mu+\nu_P),q_1=(\tau_1+\lambda_1+\mu),q_2=(\tau_2+\mu+\nu_A),d_1=(\lambda_2+\tau_3+\mu+\nu_P)$ and $d_2=(\tau_4+\mu+\nu)$.
Thus,  the basic reproduction number is given by
$$\mathcal{R}_0=\mathcal{K}(FV^{-1})=\max\{\mathcal{R}_{0P},\mathcal{R}_{0H}\}=\max\Big\{\frac{\rho c}{k_1k_2}\Big(\xi(\omega k_2+\beta)+(1-\xi)k_1\Big),\frac{\eta k}{q_1q_2}\Big(q_2+a_2\lambda_1\Big)\Big\},$$ where $\mathcal{K}$ is the spectral radius of $FV^{-1}$.
\subsubsection{Local stability of  HIV/AIDS and PCP Free Equilibrium}
\begin{Theorem} The disease-free equilibrium point $E_{0}$ of system (\ref{sys2}) is locally asymptotically stable whenever $R_{0} < 1$,  and otherwise unstable.\end{Theorem}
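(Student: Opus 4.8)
The plan is to read off the local stability of $E_0$ from the eigenvalues of the Jacobian $J_{E_0}$ of the full ten-dimensional system (\ref{sys2}), the decisive observation being that at $E_0$ this Jacobian is block triangular, so the problem collapses onto the $6\times 6$ ``infected'' block $F-V$ already assembled in the computation of $\mathcal{R}_0$. First I would order the variables as $(S,R,I_T,T,C,I_P,I_H,I_A,I_{HP},I_{AP})$ and write out $J_{E_0}$. Each force of infection $\alpha_P,\alpha_H$ vanishes at $E_0$ together with all its partial derivatives multiplied by a compartment that is zero there (the numerators of $\alpha_P,\alpha_H$ vanish at $E_0$, so the quotient-rule cross terms drop out); consequently the rows of $J_{E_0}$ belonging to $C,I_P,I_H,I_A,I_{HP},I_{AP}$ carry no entries in the columns of $S,R,I_T,T$. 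Hence $J_{E_0}$ is block upper triangular with a $4\times 4$ upper-triangular block on $(S,R,I_T,T)$ having diagonal $-\mu,-(\gamma+\mu),-\mu,-\mu$ — four eigenvalues with negative real part — and a $6\times 6$ block equal precisely to $F-V$ on $(C,I_P,I_H,I_A,I_{HP},I_{AP})$.

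Next, since $V$ is a nonsingular $M$-matrix (it is block diagonal with three $2\times 2$ lower-triangular blocks of positive diagonals $k_1,k_2$; $q_1,q_2$; $d_1,d_2$), one has $V^{-1}\ge 0$ and $FV^{-1}\ge 0$, and the splitting $(\mathcal F,\mathcal V)$ set up above satisfies the usual admissibility hypotheses: $\mathcal F,\mathcal V\ge 0$ componentwise on $\Omega$, $\mathcal V$ carries no inflow of new infections, infected compartments remain nonnegative, and the disease-free subsystem $\dot S=\Lambda-\mu S$, $\dot R=-(\gamma+\mu)R$ has the globally stable equilibrium $(\Lambda/\mu,0)$. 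By the next-generation stability theorem (cf.\ \cite{b16}), all eigenvalues of $F-V$ have negative real part if and only if $\mathcal{K}(FV^{-1})<1$, and $F-V$ has an eigenvalue in the open right half-plane when $\mathcal{K}(FV^{-1})>1$. Since the preceding subsection records $\mathcal{K}(FV^{-1})=\max\{\mathcal R_{0P},\mathcal R_{0H}\}=\mathcal R_0$, assembling the two diagonal blocks shows that $J_{E_0}$ is a stable matrix exactly when $\mathcal R_0<1$ and is unstable when $\mathcal R_0>1$, which is the assertion.

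The only genuinely laborious step — and the main obstacle — is the bookkeeping for the $10\times 10$ Jacobian: one must check carefully that, in the chosen variable order, the six infected rows really do vanish on the $S,R,I_T,T$ columns and that the resulting $6\times 6$ block coincides with $F-V$; everything after that is a direct citation. Should one prefer to sidestep the next-generation theorem (matching the style used for the two sub-models), the same conclusion follows by expanding $\det(\lambda I-J_{E_0})$ along the columns of $S,R,I_T,T$ to peel off the factor $(\lambda+\mu)^3(\lambda+\gamma+\mu)$, and then using that $F-V$ is itself block triangular — the PCP block on $(C,I_P)$, the HIV block on $(I_H,I_A)$, and the block $\begin{pmatrix}-d_1&0\\\lambda_2&-d_2\end{pmatrix}$ on $(I_{HP},I_{AP})$, with the co-infection columns feeding only into the first two — so that the remaining sixth-degree factor splits into the PCP quadratic $\lambda^2+\bigl(k_1+k_2-\rho c((1-\xi)+\xi\omega)\bigr)\lambda+k_1k_2(1-\mathcal R_{0P})$, the HIV quadratic $\lambda^2+(q_1+q_2-\eta k)\lambda+q_1q_2(1-\mathcal R_{0H})$, and $(\lambda+d_1)(\lambda+d_2)$. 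Each quadratic has all positive coefficients — hence roots with negative real parts by Routh–Hurwitz — precisely when $\mathcal R_{0P}<1$, respectively $\mathcal R_{0H}<1$, i.e.\ when $\mathcal R_0<1$, while if $\mathcal R_0>1$ one of the two constant terms turns negative, forcing a positive real root and instability.
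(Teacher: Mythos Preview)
Your proposal is correct, and the explicit route you outline in the final paragraph is essentially what the paper does: it writes out the $10\times10$ Jacobian at $E_0$ (in the order $S,C,I_P,R,I_H,I_A,I_T,I_{HP},I_{AP},T$), reads off the characteristic polynomial
\[
(\lambda+\mu)^3(\lambda+k_3)(\lambda+q_3)(\lambda+q_4)\bigl(\lambda^2+a_1\lambda+a_0\bigr)\bigl(\lambda^2+b_1\lambda+b_0\bigr),
\]
with $k_3=\gamma+\mu$, $q_3=d_1$, $q_4=d_2$, and then applies Routh--Hurwitz to the two quadratics. Your reordering of the variables to expose the block-triangular structure makes the factorization transparent where the paper simply asserts it, but the computation is the same.

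Your first route---invoking the van den Driessche--Watmough theorem on $F-V$ directly---is a genuinely different and more conceptual argument that the paper does not use. It replaces the explicit eigenvalue bookkeeping with a single citation to \cite{b16}, at the price of checking the admissibility hypotheses of the $(\mathcal F,\mathcal V)$ splitting. This buys robustness (the argument would survive, say, adding further co-infection compartments without recomputing a larger characteristic polynomial) and aligns more cleanly with how $\mathcal R_0$ was obtained in the preceding subsection; the paper's explicit factorization, by contrast, keeps the proof self-contained and mirrors the style used for the two sub-models.
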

\begin{proof}
The Jacobian matrix for system (\ref{sys2}) evaluated at disease-free equilibrium is given by
\begin{equation}J_{(E_{0})}=\begin{bmatrix}-\mu & -\rho c\omega &- \rho c& \gamma&-\eta k&-\eta ka_2&0 &-(\rho c\theta_1+\eta k a_1)&-(\rho c\theta_2+\eta k a_3)&0\\0&\xi \rho c\omega -k_1&\xi\rho c&0&0&0&0&\xi\rho c\theta_1&\xi\rho c\theta_2&0\\0&(1-\xi) \rho c\omega -k_1&(1-\xi)\rho c&0&0&0&0&(1-\xi)\rho c\theta_1&(1-\xi)\rho c\theta_2&0\\0&\pi&\epsilon&-k_3&0&0&0&0&0&0\\0&0&0&0&\eta k-q_1&\eta ka_2&0&\eta ka_1&\eta ka_3&0\\0&0&0&0&\lambda_1&-q_2&0&0&0&0\\0&0&0&0&\tau_1&\tau_2&-\mu&0&0&0\\0&0&0&0&0&0&0&-q_3&0&0\\0&0&0&0&0&0&0&\lambda_2&-q_4&0\\0&0&0&0&0&0&0&\tau_3&\tau_4&-\mu\end {bmatrix},\label{sys4an}\end{equation}
where $k_1=(\beta+\pi+\mu),k_2=(\epsilon+\mu+\nu_P),k_3=(\gamma+\mu), q_1=(\tau_1+\lambda_1+\mu),q_2=(\tau_2+\mu+\nu_A),q_3=(\lambda_2+\tau_3+\mu+\nu_P)$ and $q_4=(\tau_4+\mu+\nu)$.\\
Thus, from the Jacobian matrix (\ref{sys4an}), we get the following characteristic polynomial
\begin{align}
P(\lambda)&=(\lambda+\mu)^3(\lambda+k_3)(\lambda+q_3)(\lambda+q_4)(\lambda^2+a_1\lambda+a_0)(\lambda^2+b_1\lambda+b_0),\label{sys4ao}
\end{align}
where $a_1=k_1+k_2-\rho c\big((1-\xi)+\xi\omega\big)>0,~\text{if}~k_1+k_2>\rho c\big((1-\xi)+\xi\omega\big), a_0=k_1k_2-\rho c\big((1-\xi)k_1+\xi(k_2\omega+\beta)\big)=k_1k_2(1-\mathcal{R}_{0P})>0,~\text{if}~\mathcal{R}_{0P}<1, b_1=q_1+q_2-\eta k>0,~\text{if}~q_1+q_2>\eta k,\label{sys4ap}$ and $b_0=q_1q_2-\eta k(q_2+a_2\lambda_1)=q_1q_2(1-\mathcal{R}_{0P})>0, ~\text{if}~\mathcal{R}_{0H}<1.$
This implies that $\lambda_1=-\mu<0$ (three times), $\lambda_2=-k_3<0,~\lambda_3=-q_3=<0,~\lambda_4=-q_4<0$ and by Routh-Hurwitz stability criteria, the eigenvalues of characteristic equations $\lambda^2+a_1\lambda+a_0=0$ and $\lambda^2+b_1\lambda+b_0=0$ are negative if $a_0>0,a_1>0, b_0>0$ and $b_1>0$ which is true when $\mathcal{R}_{0P}<1$ and $\mathcal{R}_{0H}<1$.
Therefore, since all eigenvalues of characteristic polynomial (\ref{sys4ao}) are negative for $\mathcal{R}_{0}<1$, the disease-free equilibrium point of system (\ref{sys2}) is locally asymptotically stable.

\end{proof}

\subsubsection{Existence of Endemic Equilibrium Point of System (\ref{sys2})}
The HIV/AIDS and PCP co-infection endemic equilibrium point $E_e=(S^*,C^*,I_P^*,R^*,I_H^*,I_A^*,I_T^*,I_{HP}^*,I_{AP}^*,T^*)$ materializes when PCP, HIV/AIDS and their co-infection persevere in the community. It has already been established from the analysis of sub-models (\ref{sys3a}) and (\ref{sys4a}) that the endemic steady states do not exist when ${\mathcal{R}_{0H}}<1$ and ${\mathcal{R}_{0P}}<1$ respectively. This in turn signifies that there is no endemic equilibrium point of the full co-infection model (\ref{sys2}) if ${\mathcal{R}_{0}}=\max\{{\mathcal{R}_{0H}},{\mathcal{R}_{0P}}\}<1.$\\
The analytical computation of the endemic equilibrium of the full model (\ref{sys2a}) in terms of  model parameters is strenuous; however, it exists when ${\mathcal{R}_{0H}}>1$ and ${\mathcal{R}_{0P}}>1$, that is  ${\mathcal{R}_{0}}=\max\{{\mathcal{R}_{0H}},{\mathcal{R}_{0P}}\}>1.$ The existence and stability of the endemic equilibrium point $E_e=(S^*,C^*,I_P^*,R^*,I_H^*,I_A^*,I_T^*,I_{HP}^*,I_{AP}^*,T^*)$ will be numerically scrutinized under numerical simulations.
\subsection{Sensitivity analysis}
In order to determine how to reduce the burden due to HIV/AIDS, PCP and  their co-infection,  we calculate the sensitivity indices of the basic reproduction number, $ \mathcal{R}_{0}$  with respect to the parameters in the model using the approach in \cite{b9,b23}. Sensitivity analysis determines parameters that have a high impact on $ \mathcal{R}_{0}$ and hence which parameters should be targeted for intervention strategies. The sensitivity index of $\mathcal{R}_0$ with respect to a parameter, $x$ is given by 
$\Lambda^{\mathcal{R}_0}_{x}=\frac{\partial \mathcal{R}_0}{\partial x}*\frac{x}{\mathcal{R}_0}.$

The sensitivity indices of $\mathcal{R}_{0H}$ and $\mathcal{R}_{0P}$ with respect to parameters are given in Table \ref{t1}.

\begin{table}[h!]
	\caption{Numerical values of sensitivity indices of $\mathcal{R}_{0H}$ and $\mathcal{R}_{0P}$}{\color{white} hhhhh}\label{t1}
\begin{center}   	
	\begin{tabular}{c*{4}{c}l}
		\hline
		 Parameter symbol&Sensitivity Index ($\mathcal{R}_{0H}$)&\vline&Parameter symbol&Sensitivity Index ($\mathcal{R}_{0P}$)&\\
		\hline
		$\eta$&$+1.0000$&\vline&$\rho$&+1.0000	&\\
		$k$&+1.0000& \vline&$c$&+1.0000	&\\
		$a_2$&+0.1519&\vline&$\xi$&+0.1953&\\
		$\lambda_1$&$-0.0747$&\vline&$\omega$&$+0.4360$&\\
                     $\tau_1$&$-0.5666$&\vline&$\nu_P$&$-0.1512$&\\
                     $\tau_2$&$-0.0368$&\vline&$\beta$&$-0.0224$&\\
                     $\lambda_1$&$-0.0747$&\vline&$\epsilon$&$-0.3024$&\\
                     $\nu_A$&$-0.0944$&\vline&$\pi$&$-0.0563$\\
		\hline	
	\end{tabular}
\end{center}	
\end{table}
It is noted that the value of $\mathcal{R}_0$ increases when parameter values $\eta$, $k$, $a_2$, $\rho$, $\xi$ , $c$ and $\omega$ increase while the other parameters values are kept constant since they have positive indices, and this implies that the endemicity of the disease is increased.\\ 
On the other hand, when parameters  $\tau_1$, $\tau_2$, $\lambda_1$, $\nu_A$, $\epsilon$, $\beta$, $\nu_P$ and $\pi$ are increased while the  other parameters values are kept constant, the value of $\mathcal{R}_0$ decreases implying that the endemicity of the disease is decreased.
\section{Numerical Simulations}\label{sec4}
In this section, we numerically simulate the stability of the endemic equilibrium point for the full co-infection model (\ref{sys2}), the contribution of PCP carriers to the HIV/AIDS-PCP co-infection burden and the effect of treatment on the control of the co-infection burden. With data set in Table \ref{t3} and initial conditions; $S_0=10000, C_0=200, I_{P0}=250, R_0=150, I_{H0}=400, I_{A0}=250, I_{T0}=300, I_{HP0}=350, I_{AP0}=150, T_0=150$, the model was simulated using ODE solvers coded in MATLAB numerical solver. 
\begin{table}[h!]
	\caption{Parameter values used in numerical simulations}{\color{white} hhhhh} \label{t3}
	\begin{center}	   	
		\begin{tabular}{c*{4}{c}l}
			\hline
			Parameter&Value(year$^{-1}$)& Source&Parameter&Value(year$^{-1}$)&Source\\
			\hline
			$\Lambda$& 2000&Assumed&$\gamma$&0.0621$$&Assumed\\
			$\mu$& 0.073& \cite{b2}&	$\eta$&0.075&\cite{b6}\\
			$k$& 1-5&Estimated&$\rho$&0.89-0.99&\cite{b5}\\
			$c$&1-50&Assumed&$\epsilon$&0.2&Assumed\\
			$\omega$& 0.41026&Assumed&$\pi$& 0.0115&Assumed\\
			$\beta$& 0.01096&Assumed&$\nu_P$&0.1&Assumed\\
			$\nu_A$&0.333&\cite{b1}&	$\nu$&0.42&\cite{b1}\\
			$\tau_1$&0.2&Estimated&$\tau_2$& 0.13&\cite{b3}\\
			$\tau_3$& 0.314&Assumed&$\tau_4$& 0.230&\cite{b3}\\	
			$\xi$& 0.338&Assumed&$\lambda_1,\lambda_2$&0.08,0.3105 &\cite{b1}\\
			$a_1,a_2,a_3$&1,1.2,1.4&Assumed&$\theta_1,\theta_2$&1,1.02&Assumed\\
		\hline
		\end{tabular}	\end{center}
\end{table}\\
\subsection{Stability of the Endemic Equilibrium Point}
Figure \ref{f2} shows the local stability of the endemic equilibrium of the HIV/AIDS and PCP co-infection model at $\mathcal{R}_{0H}=1.0021>1$ and $\mathcal{R}_{0P}=11.6601>1$. Since the system moves to equilibrium after 40 years from the initial populations, we conclude that the endemic equilibrium point is locally asymptotically stable.
\begin{figure}
\centering
	\includegraphics[scale=0.5]{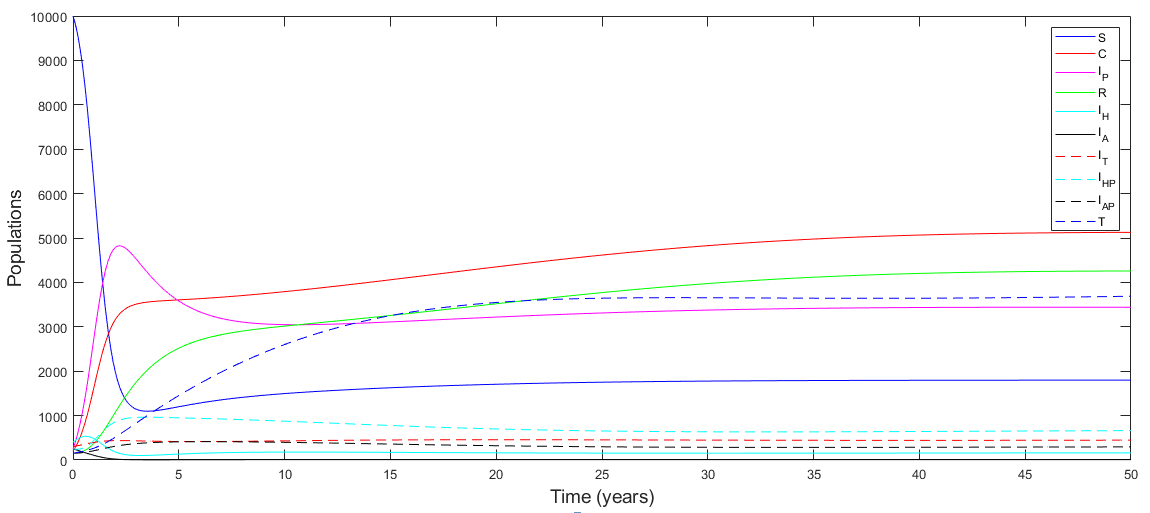}
\caption{The local stability of the endemic equilibrium point at $\mathcal{R}_{0H}=1.0021$ and $\mathcal{R}_{0P}=11.6601$.\label{f2}}
\end{figure}
\begin{figure}
\centering
	\includegraphics[scale=0.5]{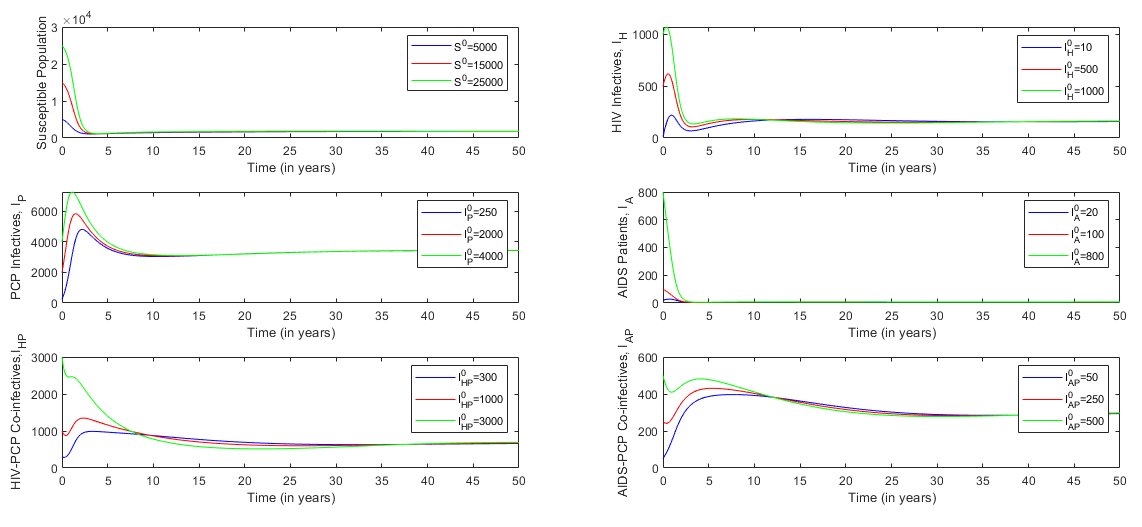}
\caption{The Global stability of the endemic equilibrium point.\label{f3}}
\end{figure}
Figure \ref{f3} indicates that the PCP and HIV/AIDS co-infection endemic equilibrium is globally stable since the system comes to  equilibrium  from any possible initial conditions.
\subsection{Visualization of the Role Played by PCP Carriers in Co-infection Dynamics of HIV/AIDS and PCP.}
\begin{figure}
\centering
	\includegraphics[scale=0.5]{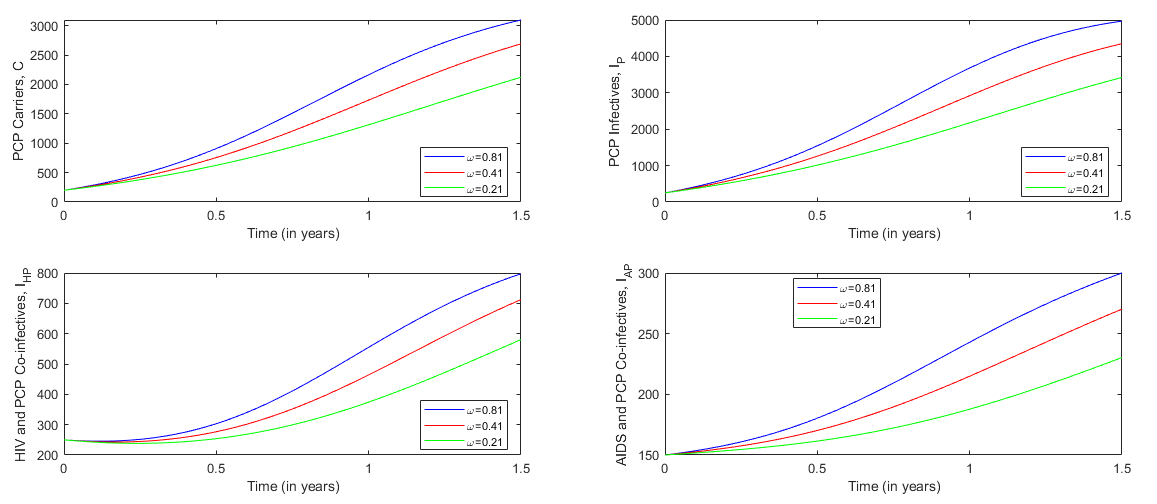}
\caption{Effect of PCP Carriers infectivity on Co-dynamics of HIV/AIDS and PCP.\label{f5}}
\end{figure}
Numerical simulations in Figure \ref{f5} show the effect of varying the infectious coefficient of PCP carriers on the PCP carrier, PCP infective and HIV/AIDS-PCP co-infected populations. As the value of $\omega$ is altered from $0.2-0.8$, Figure \ref{f5} affirms that the PCP carrier, PCP infective and the co-infected populations  also increase. This is a clear indicator that the contribution of PCP carriers in the transmission dynamics of PCP and its association with HIV/AIDS should not be given a blind eye.\\
\begin{figure}
\subsection{Visualization of the Effect of Treatment at Various Stages of Infections}
\centering
	\includegraphics[scale=0.5]{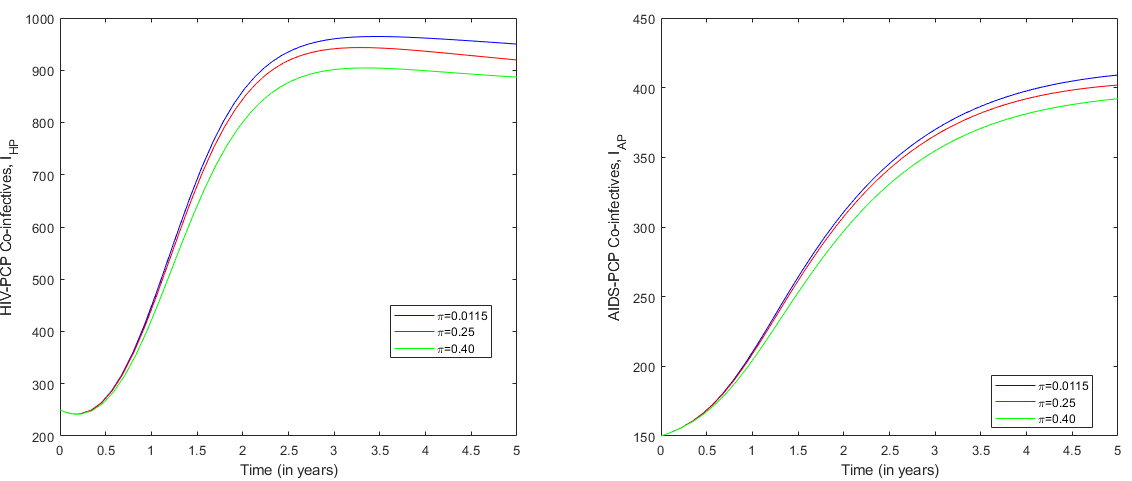}
\caption{Role of treating PCP carriers on HIV/AIDS-Pneumonia Co-dynamics.\label{f6}}
\end{figure}
\begin{figure}
\centering
	\includegraphics[scale=0.5]{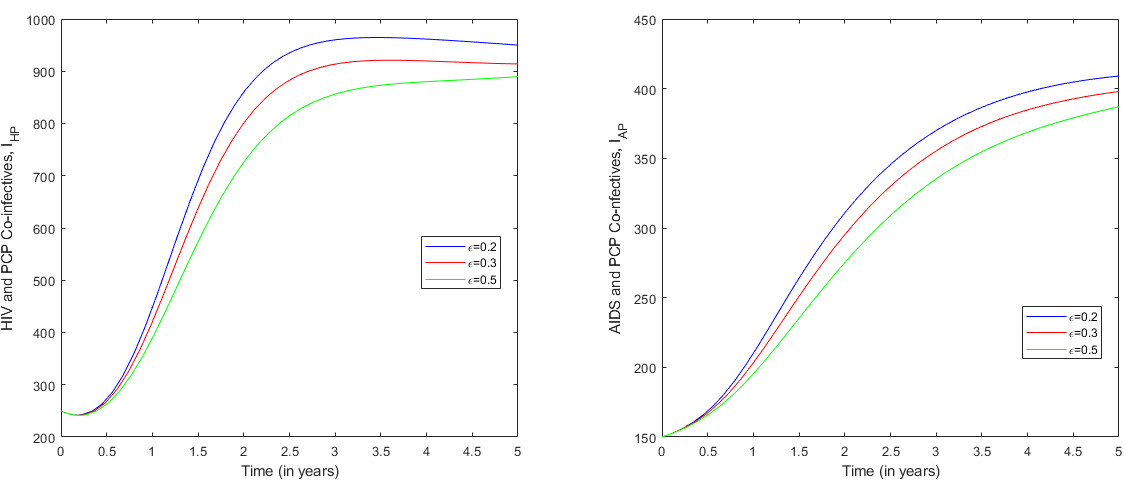}
\caption{Effect of treating PCP infectives on HIV/AIDS-Pneumonia Co-dynamics.\label{f7}}
\end{figure}
\begin{figure}
\centering
	\includegraphics[scale=0.5]{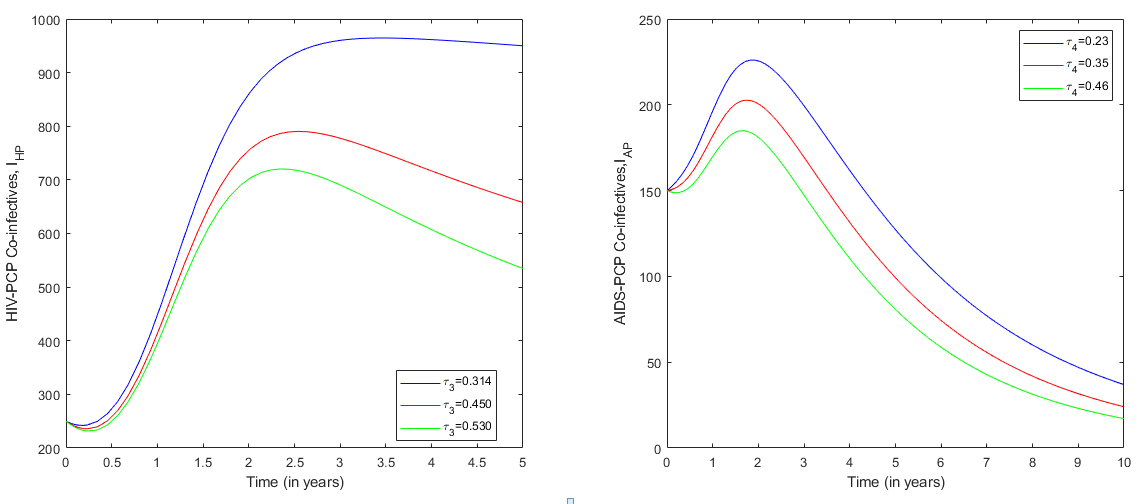}
\caption{Effect of Dual treatment on HIV-PCP Co-dynamics.\label{f8}}
\end{figure}
Simulations in Figures \ref{f6} to \ref{f8} show the effect of interrupting PCP through treatment effort at different phases of infection. Increasing the treatment rate of PCP carriers $\pi$ from $0.0115- 0.40$, Figure \ref{f6} shows that the number of dually infected individuals reduce. Figure (\ref{f7}) shows that increasing the treatment rate of PCP infectives $\epsilon$ from $0.2- 0.5$, results in a decline in number HIV/AIDS-PCP co-infected individuals.\\
The impact of increasing dual treatment rates $\tau_3$ from $0.314- 0.530$ and $\tau_4$ from $0.23 - 0.46$  are observed to reduce the number of HIV-PCP and AIDS-PCP co-infectives in Figure \ref{f8}.

\section{Discussion and Conclusion}\label{sec5}
In this paper, an HIV/AIDS and PCP co-infection model is derived and analyzed.  The basic properties of the model are shown and it is established that the model is biologically meaningful and well posed.\\ The disease-free steady states $E_{0H}$ for HIV/AIDS sub-model, $E_{0P}$ for PCP sub-model, $E_{0}$ for the dual infection and their corresponding reproduction numbers $\mathcal{R}_{0H}$,  $\mathcal{R}_{0P}$, $\mathcal{R}_{0}$ are derived. It is established that the disease-free steady states are locally asymptotically stable when their respective basic reproduction numbers are less than a unity. The equilibria $E_{0H}$ and $E_{0P}$ are globally stable whenever $\mathcal{R}_{0H}\leq 1$ and $\mathcal{R}_{0P}\leq 1$ respectively. This means that HIV/AIDS infection,  PCP infection and the HIV/AIDS-PCP co-infection will not prevail and will eventually be wiped out in the community.\\ Further analysis shows that the disease persistent equilibrium points $E_{eH}$ for HIV/AIDS sub-model and $E_{eP}$ for PCP sub-model, are unique if their corresponding reproduction numbers $\mathcal{R}_{0H}>1$ and $\mathcal{R}_{0P}>1$.
By use of suitable Lyapunov functions, the equilibria  $E_{eH}$ and  $E_{eP}$ are globally asymptotically stable whenever $\mathcal{R}_{0H}>1$ and $\mathcal{R}_{0P}> 1$ respectively. The local and global stability of the endemic equilibrium point $E_e$ of the co-infection model is numerically determined and is shown to be stable if $\mathcal{R}_0=\max\{\mathcal{R}_{0H},\mathcal{R}_{0P}\}>1$  in figures \ref{f2} and  \ref{f3}. This suggests that the HIV/AIDS infection, PCP infection and HIV/AIDS-PCP co-infection will surge when more than one HIV/AIDS or PCP or co-infected individuals are introduced in the community.\\
The sensitivity analysis of the basic reproduction number was carried out and results indicated that treatment is important in the reduction of PCP across all affected sub-groups. The co-efficient of transmission of PCP carriers gave a positive sensitivity index which implied that PCP carriers are silent spreaders of the infection and need to be interrupted in order to scale down the PCP incidence rate.\\
Numerical simulations show that increased infectivity of PCP carriers increases the number of PCP carriers, PCP infectives, HIV-PCP co-infectives and AIDS-PCP co-infectives as shown in figure \ref{f5}. This calls for an alarm to the policy makers  to introduce case finding strategy for PCP carriers since their contribution to PCP infection and its association with HIV/AIDS is enormous. This can also be achieved by testing all HIV infected individuals for a possible PCP infection and all positive cases subjected to PCP treatment.
Furthermore, simulations dictate that increased rates of treatment  of PCP carriers, PCP infectives, and dually infected individuals is  of  a great deal towards reducing the burden of PCP infection and its close association in HIV/AIDS patients.\\
We recommend that if HIV/AIDS and PCP co-infection burden is to be managed, all HIV infected individuals be subjected to obligatory PCP diagnosis and positive cases treated accordingly. It is clear that interrupting PCP at all contagious phases by treating PCP carriers, PCP infectives and HIV/AIDS-PCP co-infectives will scale down the burden of PCP on individuals living with HIV/AIDS.

The model is not without limitations. The model did not take  into consideration of immigration of carriers, infected and co-infected individuals into the system, HIV infected individuals were not split into chronic and acute subgroups, individuals who default  ART were also not included. Incorporating these processes will undoubtedly facilitate in the understanding of HIV/AIDS and PCP co-infection  transmission and control dynamics.
\section*{Author Contributions}
\textbf{Michael Byamukama:} Conceptualization, analysis and manuscript preparation.\\
\textbf{Damian Kajunguri:} Supervision and interpretation of results.\\
\textbf{ Martin Karuhanga:} Supervision, analysis and proofreading.
\section*{Conflict of Interest}
The authors declare that they have no conflicting interests.

\end{document}